\newtheorem{theorem}{Theorem}[section]
\newtheorem{lemma}[theorem]{Lemma}
\theoremstyle{definition}
\newtheorem{conj}[theorem]{Conjecture}
\newtheorem{claim}[theorem]{Claim}
\theoremstyle{definition}
\newtheorem{rmk}[theorem]{Remark}
\theoremstyle{definition}
\theoremstyle{definition}
\newtheorem{construction}[theorem]{Construction}
\theoremstyle{definition}
\newtheorem{definition}[theorem]{Definition}
\theoremstyle{definition}
\theoremstyle{definition}
\theoremstyle{definition}
\newcommand\BE{\ensuremath{\mathrm{BE}}}
\newcommand\ex{\ensuremath{\mathrm{ex}}}
\newcommand\RT{\ensuremath{\mathrm{RT}}}
\newcommand\RF{\ensuremath{\mathrm{RF}}}
\newcommand{\ep}{\varepsilon}
\newcommand{\om}{\omega}
\newcommand{\al}{\alpha}
\newcommand{\de}{\delta}
\newcommand{\ga}{\gamma}
\newcommand{\De}{\Delta}
\newcommand{\cH}{\mathcal{H}}
\newcommand{\cT}{\mathcal{T}}
\newcommand{\cA}{\mathcal{A}}
\newcommand{\cB}{\mathcal{B}}
\newcommand{\cP}{\mathcal{P}}
\newcommand{\cC}{\mathcal{C}}
\newcommand{\Ho}{\H{o}}
\newcommand{\bS}{\ensuremath{\mathbb{S}}}
\newcommand{\floor}[1]{\left\lfloor #1 \right\rfloor}
\newcommand{\ceiling}[1]{\left\lceil #1 \right\rceil}
\title{On two problems in Ramsey-Tur\'an theory}
 \author{
 J\'ozsef Balogh
 \thanks{Department of Mathematical Sciences,
 University of Illinois at Urbana-Champaign, Urbana, Illinois 61801, USA. Email: {\tt jobal@math.uiuc.edu}. Research is partially supported by NSF Grant DMS-1500121, Arnold O. Beckman Research Award (UIUC Campus Research Board 15006).
  } 
 \quad\quad
 Hong Liu
 \thanks{Mathematics Institute and DIMAP, University of Warwick, Coventry, CV4 7AL, UK.  Email: {\tt h.liu.9@warwick.ac.uk}. Supported partly by ERC grant 306493, EPSRC grant EP/K012045/1 and the Leverhulme Trust Early Career Fellowship~ECF-2016-523.}
 \quad\quad
 Maryam Sharifzadeh
 \thanks{Department of Mathematical Sciences,
 	University of Illinois at Urbana-Champaign, Urbana, Illinois 61801, USA. Email: {\tt sharifz2@illinois.edu}.}
 }
\begin{document}
\maketitle

\begin{abstract}
Alon, Balogh, Keevash and Sudakov proved that the $(k-1)$-partite Tur\'an graph maximizes the number of distinct $r$-edge-colorings with no monochromatic $K_k$ for all fixed $k$ and $r=2,3$, among all $n$-vertex graphs. In this paper, we determine this function asymptotically for $r=2$ among $n$-vertex graphs with sub-linear independence number. Somewhat surprisingly, unlike  Alon-Balogh-Keevash-Sudakov's result, the extremal construction from Ramsey-Tur\'an theory, as a natural candidate, does not maximize the number of distinct edge-colorings with no monochromatic cliques among all graphs with sub-linear independence number, even in the 2-colored case. 

In the second problem, we determine the maximum number of triangles asymptotically in an $n$-vertex $K_k$-free graph $G$ with $\alpha(G)=o(n)$. The extremal graphs have similar structure to the extremal graphs for the classical Ramsey-Tur\'an problem, i.e.~when the number of edges is maximized.
\end{abstract}

\section{Introduction}
Numerous classical problems in extremal graph theory have highly structured extremal configurations. For example, Tur\'an~\cite{Tu} in 1941 proved that $\ex(n,K_k)$, the maximum number of edges in an $n$-vertex $K_k$-free graph, is attained only by the balanced complete $(k-1)$-partite graph, known now as the \emph{Tur\'an graph} $T_{n,k-1}$. Motivated by the fact that the Tur\'an graph is particularly symmetric, admitting a $(k-1)$-partition into linear-sized independent sets, Erd\H os and S\'os~\cite{ES} introduced \emph{Ramsey-Tur\'an} type questions, where they investigated the maximum size of a $K_k$-free graph $G$ with the additional condition that $\alpha(G)=o(|G|)$.

Denote by $\RT(n,K_k, o(n))$ the Ramsey-Tur\'an function for $K_k$, i.e.~the maximum size of an $n$-vertex $K_k$-free graph with independence number $o(n)$. In 1970, Erd\Ho s and S\'os~\cite{ES} determined $\RT(n,K_k,o(n))$ for every odd $k$. The problem becomes much harder when an even clique is forbidden. For $k=4$, Szemer\'edi~\cite{Sz-k4}, using the regularity lemma, proved that $\RT(n,K_4,o(n))\le n^2/8+o(n^2)$. It had remained an open question whether $\RT(n,K_4,o(n))=\Omega(n^2)$. Bollob\'as and Erd\Ho s, in their seminal work~\cite{BE}, constructed a dense, $K_4$-free graph with sub-linear independence number, matching the upper bound above  (see Section~\ref{sec-pre} for more details). For all even $k$, the order of magnitude of $\RT(n,K_k,o(n))$ was finally determined by Erd\Ho s, Hajnal, S\'os and Szemer\'edi~\cite{EHSSz} in 1983. See~\cite{SS} for a survey and~\cite{Jozsi-1, Jozsi-2} for more recent developments on this topic.

In this paper, we will study Ramsey-Tur\'an extensions of some classical results, whose extremal graphs are close to the Tur\'an graph. See e.g.~\cite{BMSh} for one such extension of a graph tiling problem.

\subsection{Edge-colorings forbidding monochromatic cliques}
Denote by $F(n,r,k)$ the maximum number of $r$-edge-colorings that an $n$-vertex graph can have 
without a monochromatic copy of $K_k$. A trivial lower bound is given by $T_{n,k-1}$ as every $r$-edge-coloring of a $K_k$-free graph is monochromatic $K_k$-free: $F(n,r,k)\ge r^{\ex(n,K_k)}$. Erd\Ho s and Rothschild~\cite{E(R)} in 1974 conjectured that, for sufficiently large $n$, the above obvious lower bound is optimal for 2-edge-colorings. This was verified for $k=3$ by Yuster~\cite{Y}. In 2004, Alon, Balogh, Keevash and Sudakov~\cite{ABKS} settled this conjecture in full, proving that, for all $k\ge 3$ and sufficiently large $n$, the Tur\'an graph $T_{n,k-1}$ maximizes the number of $2$-edge-colorings and $3$-edge-colorings with no monochromatic $K_k$ among all graphs:
\begin{eqnarray}\label{eq-ER-orig-lower}
F(n,2,k)=2^{\ex(n,K_k)} \quad \mbox{ and } \quad  F(n,3,k)=3^{\ex(n,K_k)}. 
\end{eqnarray}
For $4$-edge-colorings, the only two known cases are when $k=3,4$: an asymptotic result was given in~\cite{ABKS} for $k=3,4$; the exact result was proved by Pikhurko and Yilma~\cite{Oleg}, who showed that $T_{n,4}$ and $T_{n,9}$ maximize the number of $4$-edge-colorings with no monochromatic $K_3$ and $K_4$ respectively, see~\cite{Oleg-K} for more recent development.

Since the Tur\'an graph is extremal in the Erd\Ho s-Rothschild problem for $r=2,3$, it is natural to consider its Ramsey-Tur\'an extension. Formally, given a function $f(n)$, we define $\RF(r,k,f(n))$ to be the maximum number of $r$-edge-colorings that an $n$-vertex graph with independence number at most $f(n)$ can have without a monochromatic copy of $K_k$. Similarly, the trivial lower bound on $\RF(r,k,o(n))$ is given by taking all edge-colorings on an extremal graph for Ramsey-Tur\'an problem:
\begin{eqnarray}\label{eq-ER-lower}
\RF(r,k,o(n))\ge r^{\RT(n,K_k,o(n))+o(n^2)}.
\end{eqnarray}
Considering~\eqref{eq-ER-orig-lower}, it is not inconceivable that the lower bound in~\eqref{eq-ER-lower} is optimal when $r$ is small. However, as shown in the following example, $\RF(r,k,o(n))$ exhibits rather different behavior than $F(n,r,k)$, even in the 2-edge-coloring case when $K_4$ is forbidden. Let $G$ be a graph obtained by putting a copy of $\Gamma$ in each partite set of $T_{n,2}$, where $\Gamma$ is a triangle-free graph with independence number $o(n)$.\footnote{The existence of $K_k$-free graph $\Gamma$ with $\al(\Gamma)=o(|\Gamma|)$ was proved by Erd\Ho s~\cite{Erdos-girth}.} Since $\Gamma$ is triangle-free, the neighborhood of every vertex is an independent set. Therefore, the independence number of the graph $\Gamma$ is at least its maximum degree, which implies that $\Gamma$ has maximum degree $o(n)$. Consider the following set of $2$-edge-colorings of $G$. Color the edges inside one partite set red, the edges inside the other partite set blue, and color all the remaining cross-edges either red or blue. It is not hard to see that none of these colorings contain monochromatic $K_4$'s,
hence, $\RF(2,4,o(n))\ge 2^{n^2/4}$, while $\RT(n,K_4,o(n))=(1/8+o(1))n^2$.

The above example already suggests an obstacle in determining $\RF(2,k,o(n))$, that is, the subgraphs induced by each color could simultaneously have linear-sized independent set. Nonetheless, our first result reveals the asymptotic behavior of $\RF(2,k,o(n))$ for every integer $k$. 

\begin{theorem}\label{thm-2coloringall}
$\RF(2,3,o(n))=2^{o(n^2)}$. For every integer $t\ge 1$ and $i\in\{1,2,3\}$, 
\begin{eqnarray*}
\RF(2,3t+i,o(n))= 2^{\RT(n,K_{4t+i},o(n))+o(n^2)}.
\end{eqnarray*}
\end{theorem}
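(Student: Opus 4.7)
The strategy generalizes the construction from the introduction. Take a Ramsey-Tur\'an extremal graph $H$ on $n$ vertices with $\alpha(H) = o(n)$ and $e(H) = \RT(n, K_{4t+i}, o(n))$; such graphs typically have a Tur\'an-type skeleton with Bollob\'as-Erd\H os or iterated Bollob\'as-Erd\H os inserts inside each block. I would fix a canonical 2-coloring of the intra-block edges and allow the inter-block edges to be 2-colored freely. Because the inter-block edges comprise all but $o(n^2)$ of $E(H)$, this yields $2^{\RT(n, K_{4t+i}, o(n)) + o(n^2)}$ valid 2-colorings. The verification that none of them contains a monochromatic $K_{3t+i}$ will mirror the introduction's argument for $k=4$, exploiting the clique-freeness of the inserts to exclude monochromatic cliques that would have to span too many blocks of the skeleton.

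\textbf{Upper bound.}
The plan is to show that any $n$-vertex $G$ with $\alpha(G) = o(n)$ that admits at least one valid 2-coloring $G = R \sqcup B$ with $R, B$ both $K_{3t+i}$-free satisfies $e(G) \le \RT(n, K_{4t+i}, o(n)) + o(n^2)$; the trivial bound then gives at most $2^{e(G)} \le 2^{\RT(n, K_{4t+i}, o(n)) + o(n^2)}$ valid 2-colorings. I would apply Szemer\'edi's regularity lemma simultaneously to $G$, $R$, and $B$ to obtain a reduced graph $G_r$ on $\ell$ clusters, with majority-color reduced subgraphs $R_r, B_r$ such that $G_r = R_r \cup B_r$. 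By the counting lemma, a monochromatic $K_{3t+i}$ in $R_r$ or $B_r$ would lift to many monochromatic $K_{3t+i}$ copies in $G$, contradicting validity, so $R_r$ and $B_r$ are both $K_{3t+i}$-free. The crux is then a Ramsey-Tur\'an-type density inequality for the family of ``unions of two $K_{3t+i}$-free graphs with sub-linear independence number,'' giving $e(G_r) \le \ex(\ell, K_{4t+i}) + o(\ell^2)$; lifting back yields the desired edge bound. The main obstacle will be establishing the sharp threshold $K_{4t+i}$, which is far smaller than $R(3t+i, 3t+i)$ and relies crucially on the pseudorandomness forced by $\alpha(G) = o(n)$ at the cluster level.

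\textbf{Case $k=3$.}
Since $\RT(n, K_3, o(n)) = o(n^2)$, the target bound $2^{o(n^2)}$ on the number of valid 2-colorings follows from the trivial $2^{e(G)}$ count whenever $e(G) = o(n^2)$. However, $e(G) = \Omega(n^2)$ is possible for some validly 2-colorable $G$ with $\alpha(G) = o(n)$: for example, take $R = K_{n/2, n/2}$ together with $B$ a Bollob\'as-Erd\H os-type triangle-free graph inside each part. For such $G$, a structural count will replace the trivial one: any valid 2-coloring $(R, B)$ is essentially forced into this bipartite-skeleton-plus-triangle-free-inserts configuration, with at most $o(n^2)$ ``flipped'' cross-edges forming a bipartite subgraph whose both-sided maximum degree is bounded by the $\alpha$ of the inserts, i.e., $o(n)$ (any vertex with more than $\alpha$ flipped neighbors on one side creates a monochromatic triangle through the in-part triangle-free graph). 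The number of such structured colorings is at most $2^{o(n^2)}$, obtained by combining $2^{O(n \log n)}$ bipartition choices, $2^{o(n^2)}$ triangle-free insert choices, and $2^{o(n^2)}$ flipped-edge configurations.
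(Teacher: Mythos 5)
Your upper bound rests on the claim that every $n$-vertex graph $G$ with $\al(G)=o(n)$ admitting a $2$-coloring with no monochromatic $K_{3t+i}$ satisfies $e(G)\le \RT(n,K_{4t+i},o(n))+o(n^2)$, after which you invoke the trivial bound $2^{e(G)}$. That claim is false, so the approach collapses at its crux. Take $t=1$, $i=1$ (forbidding monochromatic $K_4$, target exponent $\RT(n,K_5,o(n))=n^2/4+o(n^2)$): let $V_1,V_2,V_3$ be parts of size $n/3$, make $G$ complete tripartite plus a triangle-free graph with $o(n)$ independence number inside each part, and color intra-$V_1$, intra-$V_2$, $V_1V_3$ and $V_2V_3$ red, intra-$V_3$ and $V_1V_2$ blue. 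The red graph has clique number $3$ (a red clique avoids $V_1$--$V_2$ pairs, uses at most two vertices of $V_1$ or $V_2$ and at most one of $V_3$), the blue graph has clique number $2$, $\al(G)=o(n)$, yet $e(G)=n^2/3+o(n^2)>n^2/4+o(n^2)$. So bounding colorings by $2^{e(G)}$ cannot give the theorem: the point is that most edges of such a $G$ are \emph{forced} in color given the coarse structure, and what must be bounded is the number of edges that can be colored both ways, not $e(G)$. The paper does exactly this: after regularizing, it shows the \emph{weighted intersection} cluster graph $R_{\phi_1}\cap R_{\phi_2}$ (pairs dense in both colors) has no weighted clique of size $4t+i$, bounds its weight by $(b_{4t+i}+\ep_1)m^2$ via Lemma~\ref{lem-weightedclique}, and only those edges contribute a factor $2$ each; edges dense in a single color contribute $2^{o(n^2)}$ in total.

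A second missing ingredient, which you wave at with ``pseudorandomness forced by $\al(G)=o(n)$ at the cluster level,'' is how to convert weighted cliques in the intersection cluster graph into monochromatic cliques of $G$ when each individual color class may have linear independence number (this is the obstacle highlighted in the introduction). The paper's Lemma~\ref{lem-partition} is the key new tool: it partitions $V(G)=A\cup B$ so that $\al(G_{\phi_1}[A])$ and $\al(G_{\phi_2}[B])$ are both $o(n)$; then a weighted $(X,Y)$-clique of size $4t+i$ has, say, at least $\lceil |X|/2\rceil$ of its heavy-core clusters inside $A$, and Lemma~\ref{lem-completefree2} embeds a $\phi_1$-colored $K_{\lceil |X|/2\rceil+|Y|}\supseteq K_{3t+i}$, which is where the threshold $4t+i$ actually comes from. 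Your $k=3$ sketch and your lower bound are in the right spirit (the lower bound does follow the paper's construction, though ``a canonical coloring of the intra-block edges'' must be the specific balanced red/blue assignment of blocks, and the case analysis verifying no monochromatic $K_{3t+i}$ is genuinely needed), but without the partition lemma and the switch from $e(G)$ to the doubly-dense pairs, the upper bound argument does not go through.
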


The following well-known theorem determines the asymptotic value of $\RT(n,K_k,o(n))$, for every $k\ge 3$. For odd $k$, this was proved by Erd\H os and S\'os~\cite{ES}. For $k=4$, the upper bound is due to Szemer\'edi~\cite{Sz-k4}. In~\cite{BE}, Bollob\'as and Erd\H os showed that this upper bound is asymptotically sharp. These results were extended by Erd\H os, Hajnal, S\'os, and Szemer\'edi~\cite{EHSSz} to every even $k$.

\begin{theorem}\label{thm-RT}
	For every integer $k\ge 3$,
	\begin{eqnarray*}
		\RT(n,K_k,o(n))= (b_k+o(1))n^2,
	\end{eqnarray*}
	where
	\begin{eqnarray}\label{eq-bk}
	b_{k}=
	\left\{
	\begin{array}{ll}
	\frac{1}{2}.\frac{k-3}{k-1}&\mbox{\quad\quad if } k\mbox{ is odd}, \\
	
	\\
	
	\frac{1}{2}.\frac{3k-10}{3k-4}&\mbox{\quad\quad if } k\mbox{ is even}.
	\end{array}
	\right.
	\end{eqnarray}
\end{theorem}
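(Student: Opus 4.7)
The plan is to split into parity cases for $k$ and within each to handle lower and upper bounds separately. Two building blocks will recur: an Erd\H os-type $K_3$-free graph $\Gamma_m$ on $m$ vertices with $\alpha(\Gamma_m)=o(m)$ (as in the paper's earlier footnote), and the Bollob\'as-Erd\H os graph $\BE_m$, a $K_4$-free graph on $m$ vertices with $\alpha(\BE_m)=o(m)$ and $(1/8+o(1))m^2$ edges.

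For the lower bound with odd $k=2t+1$, I would take the Tur\'an graph $T_{n,t}$ and, inside each of its $t$ parts, place a scaled copy of $\Gamma_{n/t}$. A $(2t+1)$-clique would be forced to place three vertices in some common part, producing a forbidden triangle; the independence number is $t\cdot o(n)=o(n)$; and the edge count is $(1-1/t)n^2/2+o(n^2)=b_{2t+1}n^2+o(n^2)$. For even $k=2t$, I would blow up a Bollob\'as-Erd\H os gadget: for $k=4$ take $\BE_n$ directly; for general $2t$, combine a Tur\'an-type $(t-1)$-partite skeleton with one pair of parts carrying a scaled Bollob\'as-Erd\H os structure, tuning the part sizes so that the density matches $2b_{2t}$.

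For the upper bound with odd $k=2t+1$, I would induct on $t$. The base $t=1$ is immediate: a $K_3$-free graph has independent neighborhoods, hence maximum degree at most $\alpha(G)=o(n)$ and so $o(n^2)$ edges. For the inductive step, for every edge $uv$ of $G$ the common neighborhood $N(u)\cap N(v)$ is $K_{2t-1}$-free with sub-linear independence number, hence by the inductive hypothesis has at most $(b_{2t-1}+o(1))|N(u)\cap N(v)|^2$ edges. Double-counting triangles through $uv$ and averaging yields the claimed bound $e(G)\le (b_{2t+1}+o(1))n^2$.

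The main obstacle is the upper bound for even $k$. For $k=4$, I would apply Szemer\'edi's regularity lemma to $G$ and work in the reduced graph $R$: any triangle of $R$ all of whose edges are dense and $\varepsilon$-regular would force a $K_4$ in $G$, as the hypothesis $\alpha(G)=o(n)$ removes the only obstruction to completing the fourth vertex. Hence $R$ is essentially triangle-free, and $e(G)\le n^2/8+o(n^2)$. For general even $k=2t$, the same regularity framework is combined with the odd-case induction: the forbidden substructures in $R$ correspond to those that would realize a $K_{2t}$ using $t-2$ multipartite layers together with one Bollob\'as-Erd\H os-style pair, and a direct optimization over admissible configurations yields exactly the constant $(3k-10)/(6k-8)$. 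This optimization is the technically most delicate step and is where the Erd\H os-Hajnal-S\'os-Szemer\'edi constants emerge.
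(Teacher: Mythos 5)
You should first note that the paper does not actually prove Theorem~\ref{thm-RT}: it is quoted as known, with the lower bound coming from Construction~\ref{const-oddeven} (the constant $b_k$ is obtained by optimizing that construction) and the upper bounds credited to Erd\H os--S\'os, Szemer\'edi and Erd\H os--Hajnal--S\'os--Szemer\'edi; the only pieces of that machinery imported into the paper are Lemma~\ref{lem-completefree2} and Lemma~\ref{lem-weightedclique}. Your lower bounds are essentially Construction~\ref{const-oddeven} and are fine in spirit (though for general even $k$ you assert rather than verify $K_{2t}$-freeness and the part-size optimization that actually produces $\frac{1}{2}\cdot\frac{3k-10}{3k-4}$). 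The genuine gaps are in your upper bounds.

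For odd $k=2t+1$, your inductive step does not assemble into a proof. The induction hypothesis bounds $e(G[N(u)\cap N(v)])$, i.e.\ the number of $K_4$'s through the edge $uv$; ``triangles through $uv$'' are counted by $|N(u)\cap N(v)|$, about which the hypothesis says nothing, so the announced double count does not connect the local information to $e(G)$. Naive assemblies (splitting $V(G)$ into $N(u)\cap N(v)$ and the rest, or bounding $\sum_v d(v)^2$ by codegrees) either need an unjustified bound like $|N(u)\cap N(v)|\le\bigl(\tfrac{t-2}{t}+o(1)\bigr)n$ or recover only the ordinary Tur\'an bound $\tfrac12\cdot\tfrac{2t-1}{2t}n^2$; the Erd\H os--S\'os argument must reuse $\al(G)=o(n)$ \emph{inside the step} (finding an edge, not a vertex, in each successive common neighbourhood, so the clique grows by two per round), and there is also the unaddressed point that $\al=o(n)$ is only $o(|N(u)\cap N(v)|)$ when the codegree is linear. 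For $k=4$, your conclusion does not follow from your claim: if the reduced graph is triangle-free you only get $e(G)\le n^2/4+o(n^2)$, since each dense regular pair may still have density up to $1$. The bound $n^2/8$ needs the second forbidden configuration: a single regular pair of density $>1/2$ together with $\al=o(n)$ in both clusters already forces a $K_4$ (two adjacent vertices per cluster), so all usable pairs have density at most $1/2+o(1)$ \emph{and} form a triangle-free reduced graph. This is exactly the weight-$1$ versus weight-$1/2$ distinction in the paper's weighted cluster graph and Lemma~\ref{lem-completefree2}, and it is missing from your sketch. Finally, for even $k\ge 6$ the sentence that ``a direct optimization over admissible configurations yields exactly the constant'' is a restatement of the EHSS theorem (the content of Lemma~\ref{lem-weightedclique}), not a proof: you neither specify the forbidden weighted configurations nor carry out the optimization, and this is precisely the hard part of the even case.
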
 

The definition of $b_{k}$ comes from optimizing the number of edges in a construction that we will describe in Section~\ref{sec-pre} (Construction~\ref{const-oddeven}). By Theorem~\ref{thm-RT}, to prove Theorem~\ref{thm-2coloringall}, it suffices to prove the following theorem.

\begin{theorem}\label{thm-2coloringreduction}
$\RF(2,3,o(n))=2^{o(n^2)}$. For every integer $t\ge 1$ and $i\in\{1,2,3\}$, 
\begin{eqnarray*}
	\RF(2,3t+i,o(n))= 2^{(b_{4t+i}+o(1))n^2}.
\end{eqnarray*}
\end{theorem}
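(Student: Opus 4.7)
The plan is to prove matching upper and lower bounds of $2^{(b_{4t+i}+o(1))n^2}$ on $\RF(2,3t+i,o(n))$ for $t\ge 1$, with $\RF(2,3,o(n))=2^{o(n^2)}$ as the degenerate case.

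For the lower bound, I take the extremal construction realizing $\RT(n,K_{4t+i},o(n))$ (Construction~\ref{const-oddeven}): a decomposition of the vertex set into a bounded number of blocks with Bollob\'as--Erd\H os-type graphs or triangle-free graphs (of sublinear independence number) on the diagonal blocks and a complete multipartite-like skeleton between blocks. Following the template the paper gives for monochromatic $K_4$ in the introduction, I fix a canonical 2-coloring of the within-block edges---assigning colors block-by-block so that each color class restricted to a block has small clique number---and leave every between-block skeleton edge free to receive either color. A case analysis on how a would-be monochromatic $K_{3t+i}$ distributes across blocks shows that, in any fixed color, the sum of the bounded per-block clique contributions stays strictly below $3t+i$ regardless of how the free edges are colored. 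Since the skeleton accounts for $(b_{4t+i}+o(1))n^2$ edges, this exhibits at least $2^{(b_{4t+i}+o(1))n^2}$ colorings with no monochromatic $K_{3t+i}$, matching the claimed lower bound.

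For the upper bound, let $G$ be any $n$-vertex graph with $\alpha(G)=o(n)$ admitting a 2-coloring $\chi$ free of monochromatic $K_{3t+i}$. Apply Szemer\'edi's regularity lemma, regularizing both color classes simultaneously, to obtain a cluster graph $R$ on $m$ parts with color-reduced graphs $R_1,R_2$ (edge $ij\in R_c$ iff the color-$c$ density between $V_i,V_j$ exceeds a small threshold). The independence condition transfers to $\alpha(R)=o(m)$, and the counting lemma forces each $R_c$ to be $K_{3t+i}$-free. The crux is the following \emph{two-color Ramsey--Tur\'an statement}: any $m$-vertex graph $R$ with $\alpha(R)=o(m)$ that edge-partitions into two $K_{3t+i}$-free subgraphs satisfies $e(R)\le(b_{4t+i}+o(1))m^2$. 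Granted this, the usual back-substitution from $R$ to $G$ gives $e(G)\le(b_{4t+i}+o(1))n^2$, and so the number of valid 2-colorings of $G$ is at most $2^{e(G)}\le 2^{(b_{4t+i}+o(1))n^2}$; taking the supremum over $G$ closes the upper bound.

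The main obstacle is this two-color Ramsey--Tur\'an statement, which I plan to prove by an induction on $3t+i$ paralleling the Erd\H os--Hajnal--S\'os--Szemer\'edi induction behind Theorem~\ref{thm-RT}. Pick a vertex $v\in V(R)$ of high color-$1$ degree; then the color-$1$ neighborhood $N_1(v)$ must induce a $K_{3t+i-1}$-free subgraph in color $1$ (otherwise $v$ extends to a monochromatic $K_{3t+i}$), while color $2$ restricted to $N_1(v)$ remains $K_{3t+i}$-free, with $\alpha$ still sublinear. Iterating this neighborhood-peeling produces asymmetric mixed Ramsey--Tur\'an subproblems whose extremal densities combine---via the structural description of Construction~\ref{const-oddeven}---to match the factor $b_{4t+i}$, with the odd clique case of Erd\H os--S\'os serving as base. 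For the degenerate statement $\RF(2,3,o(n))=2^{o(n^2)}$, the corresponding $e(R)=o(m^2)$ bound follows from Erd\H os--Simonovits stability: if $e(R)\ge \epsilon m^2$, each dense triangle-free color class is close to bipartite, and the common refinement of the two approximate bipartitions contains a cell of linear size with essentially no edges in either color---an independent set of linear size in $R$, contradicting $\alpha(R)=o(m)$.
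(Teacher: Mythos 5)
Your lower bound outline matches the paper's. The upper bound, however, has two genuine gaps, and the central new idea of the paper is absent.

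First, the claim that ``the independence condition transfers to $\al(R)=o(m)$'' is false. An independent set of clusters in the reduced graph $R$ means the pairs between those parts are sparse or irregular, but each part can be internally dense; the union need not contain a large independent set of $G$, so $\al(G)=o(n)$ gives you no control on $\al(R)$. Consequently the ``two-color Ramsey--Tur\'an statement'' as you formulate it (an $m$-vertex graph $R$ with $\al(R)=o(m)$ partitioning into two $K_{3t+i}$-free graphs) is not the object the regularity lemma produces, and the back-substitution step collapses. Your stability argument for the degenerate $\RF(2,3,o(n))$ case has the same defect, since it again invokes $\al(R)=o(m)$. The same problem would undermine the proposed neighborhood-peeling induction: after peeling, you are working inside a color-$1$ neighborhood, and the individual color classes there can each have linear independence number --- the paper's own $K_4$ example in the introduction is designed to illustrate exactly this obstacle.

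Second, and more substantively, you are missing the paper's key lemma (Lemma~\ref{lem-partition}): for any $2$-coloring $\phi$ of a graph $G$ with $\al(G)\le\ga^2 n$, one can partition $V(G)=A\cup B$ so that $\al(G_{\phi_1}[A])\le\ga n$ and $\al(G_{\phi_2}[B])\le\ga n$. This is what restores a usable sublinear-independence hypothesis inside a single color, allowing Lemma~\ref{lem-completefree2} to convert a weighted clique in the cluster graph into a monochromatic clique in $G$. The crucial arithmetic is then Claim~\ref{cl-x-small}: a weighted $(x,y)$-clique of size $x+y=4t+i$ in $R_{\phi_1}\cap R_{\phi_2}$ has $\lfloor x/2\rfloor\le t$, and placing the majority half of $X$ inside $A$ (or $B$) yields a monochromatic $K_{\lceil x/2\rceil+y}\supseteq K_{3t+i}$. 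This is precisely the mechanism that links the RT threshold index $4t+i$ to the forbidden clique $K_{3t+i}$; your sketch has no analogue. Without the partition lemma and this size-accounting, the upper bound does not go through.

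One more minor point: even granting a correct density statement, you would need to bound the \emph{number} of colorings, not merely $e(G)$. The paper does this by summing over choices of partitions and weighted cluster graphs and observing that the only real freedom lies in edges between pairs dense in \emph{both} colors, i.e.\ $e(R_{\phi_1}\cap R_{\phi_2})$. For the asymptotic statement, bounding $e(G)\le(b_{4t+i}+o(1))n^2$ and then writing $2^{e(G)}$ would also suffice, but that density bound is exactly what you have not proved.
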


Note that  since the value of the Ramsey-Tur\'an function is only known asymptotically, we will not try to determine the exact value of $\RF(2,k,o(n))$.
Our constructions for the lower bound in Theorem~\ref{thm-2coloringreduction} are based on the Bollob\'as-Erd\H os graph~\cite{BE}. 

\subsection{A generalized Ramsey-Tur\'an problem}
The generalized Tur\'an-type problem, i.e.~for given graphs $F$ and $H$, determine $\ex(n,F,H)$, the maximum number of copies of $F$ in an $n$-vertex $H$-free graph, has been studied for various choices of $F$ and $H$. Erd\H os~\cite{Erd} determined $\ex(n,K_s,K_t)$ for all $t>s\ge 3$, showing that among all $K_t$-free graphs, $T_{n,t-1}$ has the maximum number of $K_s$'s. See also Bollob\'as and Gy\H ori~\cite{BG} for $\ex(n,K_3,C_5)$, and more recently, Alon and Shikhelman~\cite{AS} for the cases when $(F,H)$ are $(K_3,C_5)$, $(K_m,K_{s,t})$, and when both $F$ and $H$ are  trees.

Our second result studies the general function $\RT(F,H,f(n))$, which is the maximum number of copies of $F$ in an $H$-free $n$-vertex graph $G$ with $\al(G)\le f(n)$. It is not hard to see that $\RT(K_s,K_{s+1},o(n))=o(n^s)$. We determine, in the following two theorems, $\RT(K_3,K_t, o(n))$ for every integer $t$.

\begin{theorem}\label{thm-k3}
Let $t\ge 6$ be an integer and $\ell=\left\lfloor\frac{t}{2}\right\rfloor$. Then as $n$ tends to infinity,
\begin{eqnarray*}
\RT(K_3,K_t, o(n))=a_{\ell}n^3(1+o(1)),
\end{eqnarray*}
where
\begin{eqnarray}\label{eq-al}
a_{\ell}=
\left\{
	\begin{array}{ll}
		\underset{0 \leq x \leq 1}\max {\ell-2\choose 3}\left(\frac{1-x}{\ell-2}\right)^3+
		 x{\ell-2\choose 2}\left(\frac{1-x}{\ell-2}\right)^2+
		 \frac{1}{2}\cdot\left(\frac{x}{2}\right)^2\left(1-x\right)
		 &\mbox{\quad\quad if } t=2\ell, \\
		\\
				\left(\frac{1}{\ell}\right)^3{\ell\choose 3}  &\mbox{\quad\quad if } t=2\ell+1.
	\end{array}
\right.
\end{eqnarray}
\end{theorem}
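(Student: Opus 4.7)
The plan is to prove matching lower and upper bounds, adapting the Ramsey-Tur\'an framework of Erd\H os-S\'os, Szemer\'edi, and Erd\H os-Hajnal-S\'os-Szemer\'edi.

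\emph{Lower bound via construction.} For odd $t=2\l+1$, I take the complete $\l$-partite graph whose parts are $\l$ equal-sized disjoint copies of a triangle-free graph $\Gamma$ on $n/\l$ vertices with $\al(\Gamma)=o(n)$ (which exists by Erd\H os~\cite{Erdos-girth}). Since each part is triangle-free, every clique of $G$ uses at most two vertices per part, so $G$ is $K_{2\l+1}$-free with $\al(G)=o(n)$. A triangle-free graph with sublinear $\al$ has maximum degree at most $\al$ and therefore $o(n^2)$ edges, so triangles lying within at most two parts contribute $o(n^3)$, while triangles picking one vertex from each of three distinct parts give $\binom{\l}{3}(n/\l)^3(1+o(1))=a_\l n^3(1+o(1))$. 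For even $t=2\l$, I fix $x$ maximizing~\eqref{eq-al} and take the complete multipartite structure on $\l-1$ super-parts: $\l-2$ triangle-free graphs of size $(1-x)n/(\l-2)$ together with one Bollob\'as-Erd\H os graph $\BE$ of size $xn$. Since $\BE$ is $K_4$-free and each triangle-free part contributes at most two vertices to any clique, $G$ is $K_{2\l}$-free. Three triangle families match the three summands of~\eqref{eq-al}: (i) one vertex in each of three distinct triangle-free parts; (ii) one vertex in $\BE$ and one in each of two distinct triangle-free parts; (iii) an edge of $\BE$ (there are $(x^2/8+o(1))n^2$ of them) together with one vertex in some triangle-free part. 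All other configurations are $o(n^3)$.

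\emph{Upper bound via regularity.} I apply Szemer\'edi's regularity lemma to $G$ with parameters $\ep\ll\ga\ll1$, obtaining clusters $V_0,V_1,\dots,V_M$, and let $R$ be the reduced graph on $[M]$ whose edges correspond to $\ep$-regular pairs of density at least $\ga$. Each cluster of linear size inherits $\al(V_i)=o(|V_i|)$. Up to $o(n^3)$, every triangle of $G$ falls into one of three types: (A) a ``blow-up'' triangle with vertices in three distinct clusters forming a triangle of $R$, contributing $d_{ij}d_{ik}d_{jk}|V_i||V_j||V_k|$ triangles; (B) a triangle with an edge inside some $V_i$ and third vertex in a cluster $V_j$ with $ij\in R$, contributing $\sim d_{ij}|V_j|\cdot e(G[V_i])$ triangles; (C) a triangle entirely inside some $V_i$. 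The embedding lemma combined with sublinear independence inside every cluster then lifts any $K_s$ of $R$ to $K_{2s}$ in $G$ (pick an edge from each cluster's common neighborhood, which has size $\gg\al(V_i)$), and to $K_{2s+1}$ if some cluster hosts an interior triangle. The $K_t$-freeness of $G$ therefore restricts $R$ to be $K_{\lceil(t+1)/2\rceil}$-free when all clusters are triangle-free, and otherwise forces the atypical clusters to be $K_4$-free with $\al=o(|V|)$, which by Szemer\'edi~\cite{Sz-k4} contains at most $(1/8+o(1))|V|^2$ edges and, by iterating the same argument one level deeper, only $o(|V|^3)$ interior triangles.

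Combining these structural constraints on $R$ with types (A)-(C), the triangle count reduces to a finite-dimensional optimization: choose cluster sizes, which clusters (if any) host ``$\BE$-like'' $K_4$-free interiors, and the weighted triangle structure of $R$ on the remaining ``triangle-free-type'' super-parts. For odd $t=2\l+1$, the optimum is attained with $R$ a balanced $T_{M,\l}$ and all interiors trivial, giving $a_\l=\binom{\l}{3}/\l^3$. For even $t=2\l$, the optimum is attained with one super-part of clusters forming a $\BE$-like block (carrying $(x^2/8)n^2+o(n^2)$ interior edges that feed into type-(B) triangles) together with $\l-2$ triangle-free super-parts; jointly optimizing the proportion $x$ against the sizes of the remaining super-parts recovers the three-term maximum~\eqref{eq-al}. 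The main obstacle is executing this structural-to-extremal step rigorously in the even case: I must rule out configurations with more than one $\BE$-like super-part, or with higher-complexity interiors, which requires a stability refinement of Szemer\'edi's $K_4$ bound~\cite{Sz-k4} applied cluster-by-cluster, all while controlling the $o(n^3)$ errors accumulated through the regularity reduction.
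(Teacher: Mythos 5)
Your lower bound is fine and is essentially the paper's: it is Construction~\ref{const-oddeven} with $k=t$, and your triangle count of the three configurations matches the three terms of~\eqref{eq-al}. The upper bound, however, has genuine gaps. The central one is your lifting claim that any $K_s$ of the reduced graph $R$ (edges being $\ep$-regular pairs of density at least $\ga$) lifts to a $K_{2s}$ in $G$ because each cluster has sublinear independence number. This is false: to place two vertices in each cluster you need the pair densities to exceed $1/2$, and the Bollob\'as--Erd\H os graph is precisely a regular pair of density about $1/2$ whose two sides have sublinear independence number and yet contain no $K_4$. With density only $\ga$ a $K_s$ of $R$ guarantees roughly a $K_{s+1}$, not a $K_{2s}$. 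Consequently your structural conclusion that $R$ is $K_{\lceil (t+1)/2\rceil}$-free is unjustified, and the whole dichotomy ``all clusters triangle-free / some cluster hosts a $K_4$-free interior'' is not the right one. The correct bookkeeping is exactly what the paper's weighted cluster graph encodes: weight $1$ for density above $1/2+10\ep$ (such pairs support two vertices per cluster), weight $1/2$ for intermediate density (one vertex per cluster), and the EHSSz embedding lemma (Lemma~\ref{lem-completefree2}) that a weighted $(X,Y)$-clique with sublinear independence on the $X$-clusters yields $K_{|X|+|Y|}$. Relatedly, your accounting of the even-case extremal structure is misplaced: the regularity lemma returns $m\ge 1/\ep$ clusters, so all edges interior to clusters total $O(\ep n^2)$ and your types (B) and (C) contribute only $o(n^3)$; a cluster cannot ``host a $\BE$-like interior carrying $(x^2/8)n^2$ edges.'' The $\BE$-part of an extremal graph appears instead as weight-$1/2$ pairs \emph{between} clusters, i.e., inside your type (A) with density factors near $1/2$, which is why the quantity to optimize is the weighted triangle count $T(R)$.

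The second gap is that the ``finite-dimensional optimization'' you invoke at the end is the actual heart of the proof and is not carried out; you acknowledge this yourself. The paper isolates it as a purely extremal statement about weighted graphs (Lemma~\ref{lem-triangle}): every weighted graph on $n$ vertices with no weighted complete subgraph of size $t$ satisfies $T(G)\le a_t n^3$. Its proof is a two-round Zykov-type symmetrization (first over non-$G_{1/2}$ pairs, then over weight-$1/2$ pairs, Claim~\ref{cl-2-symm}), producing nested equivalence classes $\cA$, $\cB$, followed by Claims~\ref{claim-twoA} and~\ref{claim-oneB} showing that each $\cB$-class contains at most two $\cA$-classes and at most one $\cB$-class is ``doubled''; only then does the problem reduce to the one-variable optimization defining $a_\ell$, ruling out in particular configurations with several $\BE$-like blocks. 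Nothing in your proposal substitutes for this: a ``stability refinement of Szemer\'edi's $K_4$ bound applied cluster-by-cluster'' addresses the interior of individual clusters, whereas the difficulty is global (how many half-weight super-parts there are and how their sizes trade off against the complete parts). Without the corrected embedding lemma and some form of Lemma~\ref{lem-triangle}, the upper bound does not go through.
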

In fact, our proof shows that all the extremal graphs should have the structure as those in~Construction~\ref{const-oddeven}. The definition of $a_{\ell}$ comes from optimizing the number of $K_3$'s in these graphs.

For the general case $t>s\ge 3$, we present a construction in the concluding remark which we believe gives the right answer. Our next result verifies the first non-trivial case.
\begin{theorem}\label{thm-ks}
For every $s\ge 3$,	
$$\RT(K_s,K_{s+2},o(n))=\left(2^{-{s\choose 2}}+o(1)\right)\left(\frac{n}{s}\right)^s.$$
\end{theorem}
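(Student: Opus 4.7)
The plan is to prove matching lower and upper bounds, with the upper bound by induction on $s$. For the lower bound, I would generalize the Bollob\'as-Erd\H os graph to a multipartite setting: partition $V=V_1\cup\cdots\cup V_s$ with $|V_i|=n/s$, place a triangle-free graph with independence number $o(n)$ inside each $V_i$, and between each pair $(V_i,V_j)$ install a bipartite graph of density $1/2+o(1)$ analogous to the bipartite Bollob\'as-Erd\H os construction, chosen to ensure $\alpha(G)=o(n)$ and $K_{s+2}$-freeness. Provided these structural properties hold, the count of transversal $K_s$'s (one vertex per part) is asymptotically $\prod_{i<j}(1/2)\cdot\prod_i|V_i|=2^{-\binom{s}{2}}(n/s)^s$, and contributions from $K_s$'s using two or more vertices inside a triangle-free part (requiring an inside edge) are seen to be $o(n^s)$.

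For the upper bound, I would induct on $s$. The base case $s=2$ is the classical Szemer\'edi bound $\RT(n,K_4,o(n))\le n^2/8+o(n^2)$ underlying Theorem~\ref{thm-RT}. For the inductive step, use the standard identity expressing the number of $K_s$'s in $G$ as $\tfrac{1}{s}\sum_v$ (the number of $K_{s-1}$'s in $G[N(v)]$). Each link $G[N(v)]$ is $K_{s+1}$-free with $\alpha\le o(n)$, so the induction hypothesis yields at most $(2^{-\binom{s-1}{2}}+o(1))(d(v)/(s-1))^{s-1}$ copies of $K_{s-1}$ in $G[N(v)]$, whence the number of $K_s$'s in $G$ is at most
\[
\frac{2^{-\binom{s-1}{2}}}{s(s-1)^{s-1}}\sum_v d(v)^{s-1}+o(n^s).
\]

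The main obstacle is tightly bounding $\sum_v d(v)^{s-1}$: the edge bound only controls the first moment, whereas the $(s-1)$-th moment is maximized at concentrated degree sequences. The target bound $\sum_v d(v)^{s-1}\le n\bigl((s-1)n/(2s)\bigr)^{s-1}(1+o(1))$ corresponds to a nearly regular degree sequence at $d(v)\approx (s-1)n/(2s)$, which is precisely the degree in the lower-bound construction. I would resolve this via a stability argument: a vertex $v$ of atypically large degree can produce many $K_{s-1}$'s in its link only if $G[N(v)]$ is close to the $(s-1)$-partite extremal configuration for $\RT(K_{s-1},K_{s+1},o(n))$, and propagating this constraint across many such vertices would force $G$ itself to look like an edge-extremal $K_{s+2}$-free Ramsey-Tur\'an graph, which is nearly $\lfloor(s+2)/2\rfloor$-partite with triangle-free parts and hence has only $o(n^s)$ copies of $K_s$. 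A quantitative version of this trade-off, using Szemer\'edi's regularity lemma applied to both $G$ and its links, would close the gap and yield the matching upper bound $(2^{-\binom{s}{2}}+o(1))(n/s)^s$.
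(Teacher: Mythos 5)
Your plan contains genuine gaps on both sides. For the lower bound, the construction you describe is morally the paper's (a copy of a Bollob\'as--Erd\H{o}s-type bipartite graph between every pair of $s$ parts), but the two properties you simply ``provide'' are exactly the content of the proof. First, the number of transversal $K_s$'s is \emph{not} a consequence of each pair having density $1/2$: arbitrary bipartite graphs of density $1/2$ between the parts can conspire to give far fewer (even zero) transversal cliques, so the factor $\prod_{i<j}\tfrac12$ needs justification. In the paper all $s$ parts are sampled from the \emph{same} partition of $\bS^d$ into $n/s$ domains, so that for a partial clique $v_1,\dots,v_{\ell-1}$ the common neighbourhood inside $V_\ell$ is an intersection of spherical caps of measure $\approx 1/2$ centred at nearly orthogonal points, which is what yields the factor $2^{-(\ell-1)}n/s-o(n)$ at each step. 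Second, inserting an arbitrary triangle-free graph with small independence number inside each $V_i$ can destroy $K_{s+2}$-freeness: an added edge inside $V_i$ together with an added edge inside $V_j$ lying in the relevant caps creates a $K_4$ in $G[V_i\cup V_j]$, which can then be extended through the dense cross-pairs to a $K_{s+2}$. In the paper the inside-part edges are the (geometric, near-antipodal) edges of $\BE(V_i,V_j)$ itself, and $K_{s+2}$-freeness is deduced from each $G[V_i]$ being triangle-free \emph{and} each $G[V_i\cup V_j]$ being $K_4$-free.

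For the upper bound your route (induction on $s$ through vertex links) is genuinely different from the paper's, but the step you defer is the whole difficulty, and the stability dichotomy you sketch points the wrong way. In the actual extremal graph (BE graphs between all $\binom{s}{2}$ pairs) \emph{every} vertex has a near-extremal link and degree $\approx (s-1)n/(2s)$, yet the graph is far from the edge-extremal Ramsey--Tur\'an configuration; so ``many near-extremal links $\Rightarrow$ $G$ is close to the edge-extremal graph $\Rightarrow$ only $o(n^s)$ copies of $K_s$'' cannot be the correct trade-off. What you must actually exclude is a positive proportion of vertices whose degree substantially exceeds $(s-1)n/(2s)$ while their links remain near-extremal \emph{for their size}; the known edge bound $\RT(n,K_{s+2},o(n))=(b_{s+2}+o(1))n^2$ allows average degree well above $(s-1)n/(2s)$ (e.g.\ $n/2$ versus $n/3$ for $s=3$), so the $(s-1)$-st moment $\sum_v d(v)^{s-1}$ is not controlled, and no proved stability statement for $\RT(K_{s-1},K_{s+1},o(n))$ is available to you (the induction hypothesis is only a counting bound). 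The paper bypasses induction entirely: apply Theorem~\ref{thm-color} to $G$, and observe via Lemma~\ref{lem-completefree2} that the weighted cluster graph contains neither a weighted $(1,s+1)$-clique nor a weighted $(2,s)$-clique, i.e.\ it is $K_{s+1}$-free and no $K_s$ in it uses an edge of density exceeding $1/2+10\ep_1$. Erd\H{o}s's theorem then bounds the number of cluster $K_s$'s by $(m/s)^s$, each contributing at most $(1/2+10\ep_1)^{\binom{s}{2}}(n/m)^s$ copies in $G$, which gives $\bigl(2^{-\binom{s}{2}}+\ep\bigr)(n/s)^s$ after accounting for edges inside clusters and in sparse or irregular pairs. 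Unless you can supply the missing stability mechanism (and correct its direction), your upper-bound argument does not close.
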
 

\medskip

\noindent\textbf{Organization.} We first introduce some tools in Section~\ref{sec-pre}. Then in Section~\ref{sec-2coloring}, we prove Theorem~\ref{thm-2coloringreduction}, and in Section~\ref{sec-ks}, Theorems~\ref{thm-k3} and~\ref{thm-ks}. 

\medskip

\noindent\textbf{Notation.} Let  $G=(V,E)$ be an $n$-vertex graph and $e(G)=|E(G)|$. For every $v\in V$ and 
$U,U'\subseteq V$, denote by $d_{U}(v)$ the degree of $v$ in $U$. Also, let $N_{U}(v)$ be the set of 
vertices $u\in U$ such that $vu\in E(G)$. Denote by $G[U,U']$ the induced bipartite subgraph of $G$ on partite sets $U$ and $U'$. Let $k_s(G)$ be the number of $K_s$ in $G$. For every 
$A\subseteq V(G)$ and an \mbox{$r$-coloring} of $E(G)$ with colors $\{c_1,\ldots, c_r\}$, let $G_{c_i}[A]$ be 
the $c_i$-colored subgraph of $G$ induced by the vertex set $A$. We will write $G_{c_i}$ instead of 
$G_{c_i}[V(G)]$. We fix throughout the paper a function $\omega(n)$ of $n$ such that $\omega(n)\rightarrow\infty$ arbitrary slowly. If we claim that a result holds whenever $0 <  a \ll b \leq 1$, then this means that there is a non-decreasing
function $f : (0, 1] \rightarrow (0, 1]$ such that the result holds for all $0 < a, b\leq1 $ with $a \leq f(b)$.


\section{Preliminaries}\label{sec-pre}
We start with a formal definition for $\RT(n,H,o(n))$.
\begin{definition}
	For a graph $H$ and a function $f(n)$, let
	\begin{eqnarray*}
		\RT(n,H,o(f(n)))=n^2\cdot
		\lim\limits_{\ep\rightarrow 0}\lim\limits_{n\rightarrow\infty}\frac{\RT(n,H,\ep f(n))}{n^2}+o(n^2).
	\end{eqnarray*}
	
\end{definition}

Bollob\'as and Erd\Ho s~\cite{BE} constructed a family of $n$-vertex $K_4$-free graphs with independence number $o(n)$ and $(\frac{1}{8}+o(1))n^2$ edges. We follow the description in~\cite{SS} to present their construction. For a constant $\ep>0$, and sufficiently large integers $d$ and $n_0$, assume $n>n_0$ is even and $\mu=\ep/\sqrt d$. Next, partition the high-dimensional unit sphere $\bS^d$ into $n/2$ domains, $D_1,\ldots,D_{n/2}$, of equal measure with diameter\footnote{The diameter is the maximum distance between any two points in each domain.} less than $\mu/2$. For every $1\le i\le n/2$, choose two points $x_i, y_i\in D_i$. Let $X=\{x_1,\ldots,x_{n/2}\}$ and $Y=\{y_1,\ldots,y_{n/2}\}$. Let $\BE(X,Y)$ be the graph with vertex set $X\cup Y$ and edge set as follows. For every $x,x'\in X$ and $y,y'\in Y$,
\begin{enumerate}
\item let $xy\in E(\BE(X,Y))$ if their distance is less than $\sqrt 2-\mu$,
\item let $xx'\in E(\BE(X,Y))$ if their distance is more than $2-\mu$,
\item let $yy'\in E(\BE(X,Y))$ if their distance is more than $2-\mu$.
\end{enumerate} 

Note that the number of edges with both ends in $X$ or $Y$ is $o(n^2)$.  

Next, for every integer $k\ge 3$, we will describe a family of $n$-vertex $K_k$-free graphs with independence number $o(n)$. As we mentioned earlier, the constant $b_k$ defined in~\eqref{eq-bk} comes from maximizing the number of edges in the construction below. In other words, some of these graphs are extremal graphs for Theorem~\ref{thm-RT}, i.e.~they have $(b_k+o(1))n^2$ edges.

\begin{construction}\label{const-oddeven}
	Given $k\ge 3$, denote by $\cH(n,k)$ the family of $n$-vertex graphs $G$ obtained as 
	follows. Let $\ell=\floor{\frac{k}{2}}$, and $\Gamma_n$ be an $n$-vertex triangle-free graph with 
	$\al(\Gamma_n)=o(|\Gamma_n|)$. If $k$ is odd, start 
	with a complete balanced $\ell$-partite graph on vertex set $V_1\cup\ldots\cup 
	V_{\ell}$. Then put a copy of $\Gamma_{|V_i|}$ in each $V_i$. If $k$ is even, 
	partition the vertex set into $\ell$ parts $\{V_1,\ldots,V_\ell\}$, such that $|V_1|=|V_2|$. First, let $G[V_1\cup V_2]$ be a copy of the Bollob\'as-Erd\Ho s graph $\BE(V_1,V_2)$; then for every $i\in \{1,\ldots,\ell\}$ and $j\in\{3,\ldots,\ell\}\setminus\{i\}$, let $G[V_i,V_j]$ be a complete bipartite graph; next, for every $i\in\{3\ldots\ell\}$, put a copy of 
	$\Gamma_{|V_i|}$ in each $V_i$.
\end{construction}

\begin{rmk}
	Note that in Construction~\ref{const-oddeven}, for even $k$, $|V_1|=|V_2|$ and  $\{V_1,\ldots,V_\ell\}$ is not necessarily an equipartition.  
\end{rmk}
We will also need the following definitions of regular partitions and weighted cluster graph.
\begin{definition}
	Let $G$ be a graph and $A,B\subseteq V(G)$. Denote by  $d(A,B):=\frac{e(G[A,B])}{|A||B|}$ the {\it density} of the pair $(A,B)$. Given $\ep>0$, a pair $X,Y\subseteq V(G)$ is \emph{$\ep$-regular} if for every $A\subseteq X$ and $B\subseteq Y$ with $|A|\ge \ep |X|$ and $|B|\ge \ep |Y|$, $|d(A,B)-d(X,Y)|\le \ep$. A vertex partition of $G$, $V(G)=C_1\cup \ldots\cup C_m$ is $\ep$-regular if all but $\ep m^2$ pairs of $(C_i,C_j)$ are $\ep$-regular.
\end{definition}

\begin{definition}
For every $\ep>0$, positive integer $t$, and an $n$-vertex graph $G=(V,E)$, let $\cC=\{C_1, \ldots, C_m\}$ be an $\ep$-regular partition of $V(G)$ with $m\ge t$. Denote by $R$ the \emph{cluster graph} (with respect to $\ep$) with vertex set $\cC$, and $C_i$ and $C_j$ are adjacent if the pair $(C_i,C_j)$ is $\ep$-regular with density at least $10\ep$. We now define the \emph{weighted cluster graph}, $R=(\cC,w)$ (with respect to $\ep$), on the vertex set $\cC$ as follows. For an $\ep$-regular pair $(C_i,C_j)$, we will define:
\begin{eqnarray*}
w(C_i,C_j):=\left\{
	\begin{array}{ll}
		0\mbox{\ \ \ \  if }d(C_i,C_j)\le 10\ep \mbox{ or }(C_i,C_j)\mbox{ is an irregular pair},\\
		\frac{1}{2}\mbox{\ \ \ \  if }10\ep<d(C_i,C_j)\le 1/2+10\ep,  \\
		1\mbox{\ \ \ \  if }d(C_i,C_j)>1/2+10\ep.
	\end{array}
\right.
\end{eqnarray*}
\end{definition}

\begin{definition}\label{def-weight}
A \emph{weighted graph} $G$ is an ordered triple $(V,E,w)$ where $E:={V\choose 2}$, set of all unordered pairs of vertices, and $w:E\rightarrow 
\{0,1/2,1\}$. Define $G_{1/2}=(V,E_{1/2})$ where $E_{1/2}=\{e\in E: w(e)\ge 1/2\}$ and $G_{1}=(V,E_{1})$ 
where $E_{1}=\{e\in E: w(e)=1\}$. Denote by $e(G)=\sum_{e\in E(G)}w(e)$.

For two weighted graphs $G=(V,E,w)$ and $G'=(V,E,w')$, define $G\cap G'=(V,E,w'')$ where $w''(e)=\min\{w(e),w'(e)\}$. For $X\subseteq Y\subseteq V$, we call $(X,Y)$ a \emph{weighted $(|X|,|Y|)$-clique} or \emph{weighted complete subgraph} of size $\ell$ if ${X\choose 2}\subseteq E_1$ and ${Y\choose 2}\subseteq E_{1/2}$ and $|X|+|Y|=\ell$. Also, let the \emph{weighted clique number} of $G$ be the size of the largest weighted complete subgraph of $G$. 

For a triangle $T=e_1e_2e_3$, let $w(T)=\prod_{i=1}^{3} w(e_i)$. Also, let 
$T(G)=\sum_{T\in\cT} w(T)$, where $\cT$ is the set of all triangles in $G$. For $X\subseteq V(G)$, denote $T(X)=\sum_{T\in G[X]}w(T)$.
\end{definition}

We need the following two lemmas and theorem from~\cite{EHSSz}, the first one has been proved in the proof of Theorem~2 in~\cite{EHSSz}.

\begin{lemma}\label{lem-completefree2}
For every $\ep>0$, there exist $\de>0$ and $n_0$ such that for every $n$-vertex graph $G$ with $n\ge n_0$, if its weighted cluster graph $R(\cC,w)$ with respect to $\ep$ contains a weighted clique $(X,Y)$ of size $\ell$ such that $\al(G[X])=\de n$, then $G$ contains a copy of $K_\ell$.
\end{lemma}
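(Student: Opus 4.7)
The plan is to find the required $K_\ell$ by an iterative greedy embedding that exploits the regularity of the pairs inside $Y$ together with the sub-linear independence condition on $G[X]$. Write $a=|X|$ and $b=|Y|$, so that $X\subseteq Y$ and $\ell=a+b$. I aim to pick two mutually adjacent vertices from each cluster of $X$ and one vertex from each cluster of $Y\setminus X$, producing $2a+(b-a)=\ell$ vertices which together induce a clique in $G$.

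I process the $b$ clusters of $Y$ in some fixed order, maintaining for each still-unprocessed cluster $C'\in Y$ a candidate set $A_{C'}\subseteq C'$, initialised to $A_{C'}=C'$. When a cluster $C\in Y\setminus X$ is processed, I pick a single vertex $w_C\in A_C$ that is typical with respect to every remaining $A_{C'}$, in the sense that $|N_G(w_C)\cap A_{C'}|\ge(d(C,C')-\ep)|A_{C'}|$; such vertices account for all but $\ep|C|$ vertices of $C$ by $\ep$-regularity and the slicing property (valid as long as $|A_{C'}|\ge\ep|C'|$). I then update $A_{C'}\leftarrow A_{C'}\cap N_G(w_C)$ for each remaining $C'$. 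When a cluster $C\in X$ is processed, I pick two typical adjacent vertices $u_C,v_C\in A_C$ with $u_Cv_C\in E(G)$ and update $A_{C'}\leftarrow A_{C'}\cap N_G(u_C)\cap N_G(v_C)$. A typical single choice shrinks $A_{C'}$ by a factor of roughly $d(C,C')$ and a typical pair choice by roughly $d(C,C')^2$; combined with the density lower bounds $1/2+10\ep$ for pairs in $X$ and $10\ep$ for all other pairs in $Y$, this ensures every $A_{C'}$ retains size at least $c(\ep,\ell)\,|C'|\ge c(\ep,\ell)\,n/m$ throughout, where $m=m(\ep)$ is the number of clusters.

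The critical point, and the place at which the sub-linear independence hypothesis enters, is finding the edge $u_Cv_C$ inside $C\in X$ at the moment $C$ is processed. The intersection of $A_C$ with the typical vertices of $C$ (with respect to the remaining clusters) still has size at least $(c(\ep,\ell)-\ep\ell)|C|\ge c'(\ep,\ell)\,n/m$, and it lies inside $\bigcup_{D\in X}D$. Choosing $\de\ll c'(\ep,\ell)/m$ makes this quantity strictly exceed $\al(G[X])\le\de n$, so the set is not independent in $G$ and therefore contains an edge, which I select as $u_Cv_C$.

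The main technical obstacle is ensuring that, after intersecting $A_{C'}$ (for $C'\in X$) with the common neighborhood of a chosen pair $(u_C,v_C)$ in another $X$-cluster, enough of $A_{C'}$ still survives. This is precisely where the weight-$1$ threshold $d(C,C')>1/2+10\ep$ is used: by inclusion--exclusion inside $A_{C'}$, the common neighborhood of $u_C,v_C$ has size at least $(2(d(C,C')-\ep)-1)|A_{C'}|\ge 18\ep\,|A_{C'}|$, which remains of order $n/m$. With $\de=\de(\ep,\ell)$ chosen so that every intermediate candidate set exceeds $\de n$, and $n_0=n_0(\ep,\ell)$ taken large enough that $|C'|\gg\de n$ for every cluster, the iteration successfully produces the required $K_\ell$.
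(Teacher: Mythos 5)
You should first note that the paper does not actually prove this lemma: it is quoted as having been ``proved in the proof of Theorem 2'' of Erd\H os--Hajnal--S\'os--Szemer\'edi, so a self-contained greedy embedding like yours is a reasonable thing to attempt; but as written it has two concrete gaps. The first is structural. When you embed a pair $u_C,v_C$ from a cluster $C\in X$, you only justify the survival of the candidate sets $A_{C'}$ for the \emph{other} clusters of $X$, via inclusion--exclusion, which needs density above $1/2$. For a remaining cluster $C'\in Y\setminus X$ the pair $(C,C')$ may have weight $1/2$, i.e.\ density barely above $10\ep$, where $2(d-\ep)-1$ is negative; and the alternative ``shrink by $(d-\ep)^2$'' bound you invoke earlier is not available either, because it requires $v_C$ to be typical with respect to the already shrunk set $N(u_C)\cap A_{C'}$, whereas your edge $u_Cv_C$ is produced by the independence-number argument inside the set of vertices typical with respect to the \emph{current} candidate sets; there is no regularity inside $C$ that lets you couple the choice of $v_C$ to the neighbourhood of $u_C$. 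This particular gap can be repaired by fixing the processing order (embed one vertex from every cluster of $Y\setminus X$ first, and only then the pairs from the clusters of $X$, so that at pair-steps all remaining pairs of clusters have weight $1$), which your ``some fixed order'' does not do.

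The second gap is quantitative and is not cured by reordering. Your maintenance claim $|A_{C'}|\ge c(\ep,\ell)|C'|$ must be compatible with the condition you yourself flag as necessary, namely $|A_{C'}|\ge\ep|C'|$, since $\ep$-regularity says nothing about subsets smaller than $\ep|C'|$. But weight-$1/2$ pairs only guarantee density $>10\ep$, so a singleton step shrinks the surviving sets by a factor of about $9\ep$ (and a pair step, via inclusion--exclusion, by about $18\ep$); after just two refinements a candidate set can have size $(9\ep)^2|C'|<\ep|C'|$, and at the next step no ``typical'' vertex can be guaranteed, so the iteration stalls. In other words $c(\ep,\ell)$ is of order $(9\ep)^{\ell}$, far below $\ep$, and your argument only goes through for weighted cliques of size at most about $3$ --- while the paper applies the lemma with $\ell=4t+i\ge 5$ and $\ell=t\ge 6$. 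Any correct proof of the lemma as stated, with the density thresholds $10\ep$ and $1/2+10\ep$ tied to the regularity parameter itself, has to supply an extra idea at exactly this point (this is what the citation to~\cite{EHSSz} is doing work for; the difficulty would disappear if the weight-$1/2$ threshold were a constant $d_0$ with $\ep\ll d_0$, but that is not what the lemma provides).
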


\begin{lemma}\label{lem-weightedclique}
For every $\ep>0$ and integer $k\ge 3$ there exists $n_0$ such that for every $n$-vertex weighted graph $G=(V,E,w)$ with $n\ge n_0$, if $G$ does not contain a weighted complete subgraph of size $k$, then
\begin{eqnarray*}
e(G)\le (b_{k}+\ep)n^2,
\end{eqnarray*} 
where $b_{k}$ is defined in~\eqref{eq-bk}.
\end{lemma}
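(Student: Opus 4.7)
The plan is to reduce the edge-weight bound to a finite-dimensional Lagrangian optimization via two stages of Zykov-style symmetrization. For a weighted graph $G = (V, E, w)$ and probability vector $x \in \Delta^{n-1}$, define the graph Lagrangian $L(G, x) := \sum_{uv} w(uv) x_u x_v$. Since $L(G, \mathbf{1}/n) = e(G)/n^2$, it suffices to show $L^*(G) := \max_x L(G, x) \le b_k$ whenever $G$ has no weighted $k$-clique; the $\ep n^2$ slack in the lemma absorbs any remaining loss.

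At any maximizer $x^*$, the expansion $L(G, x^* + t(e_u - e_v)) = L(G, x^*) + t(\partial_u L - \partial_v L) - t^2 w(uv)$ has vanishing linear (KKT) and quadratic coefficients when $u, v \in \mathrm{supp}(x^*)$ and $w(uv) = 0$, so mass can be pushed from $v$ onto $u$ while keeping $L$ constant. Iterating, I assume the support $S$ (with $m := |S|$) has $w(uv) \ge 1/2$ for all $u, v \in S$; applying the no-weighted-$k$-clique hypothesis to the pair $(X, Y) = (X_0, S)$, where $X_0$ is a maximum clique in $H := G_1[S]$, yields $\omega(H) + m \le k - 1$. A second, classical Zykov symmetrization on $H$---replacing $v$'s $H$-neighborhood by $u$'s when $uv \notin H$ and the $x^*$-weighted $H$-degree of $u$ is at least that of $v$---fixes $\|x^*\|^2$, does not decrease $L(H, x^*)$, and does not increase $\omega(H)$. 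So I may further assume $H$ is a complete $w'$-partite graph on $[m]$ with parts of sizes $n_1, \ldots, n_{w'}$, where $w' \le \omega(H)$.

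For such $H$, parameterize $x$ by the part masses $y_i := \sum_{v \in V_i} x_v$; spreading $y_i$ uniformly within $V_i$ minimizes $\|x\|^2 = \sum y_i^2/n_i$, and
\[
F(H, x) := \tfrac14(1 - \|x\|^2) + \tfrac12 L(H, x) = \tfrac12 - \tfrac14 \sum_i \tfrac{n_i + 1}{n_i} y_i^2.
\]
By Cauchy--Schwarz with $y_i \propto n_i/(n_i + 1)$, the maximum over $y \in \Delta^{w' - 1}$ equals $\tfrac12 - 1/\bigl(4 \sum_i n_i/(n_i + 1)\bigr)$. Concavity of $n \mapsto n/(n + 1)$ picks out the unbalanced partition $(p, 1, \ldots, 1)$ with $p := m - w' + 1$, giving $\sum_i n_i/(n_i + 1) = (w' + 1)/2 - 1/(p + 1)$. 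The constraint $m + w' \le k - 1$ is tight at the optimum, and integrality of $w' = (k - p)/2$ forces $p$ to have the same parity as $k$. Minimizing $g(p) := p + 4/(p + 1)$ over such positive integers gives $g(1) = 3$ for odd $k = 2\ell + 1$, producing $F^* = \tfrac12 - 1/(k - 1) = (\ell - 1)/(2\ell) = b_{2\ell + 1}$; and $g(2) = 10/3$ for even $k = 2\ell$, producing $F^* = \tfrac12 - 3/(3k - 4) = (3\ell - 5)/(6\ell - 4) = b_{2\ell}$, in agreement with~\eqref{eq-bk}.

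The main obstacle is the even case, where the optimum occurs at a non-uniform distribution on the unbalanced partition $(2, 1, \ldots, 1)$---this is the Lagrangian shadow of the Bollob\'as--Erd\H{o}s construction appearing in Construction~\ref{const-oddeven}. Justifying the second Zykov reduction for the combined functional $F(H, x)$, rather than for the pure graph Lagrangian $L(H, x)$, requires observing that the modification keeps $x^*$ fixed and only rewires $H$, so $\|x\|^2$ is preserved while only $L(H, \cdot)$ can increase.
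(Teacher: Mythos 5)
The paper does not prove this lemma itself---it is quoted from Erd\H{o}s--Hajnal--S\'os--Szemer\'edi~\cite{EHSSz}---so your Lagrangian route is necessarily an independent argument, and its skeleton is sound: the reduction $e(G)/n^2=L(G,\mathbf{1}/n)\le L^*(G)$, the Motzkin--Straus shifting to a maximizer $x^*$ whose support $S$ carries only weights $\ge 1/2$, the observation that a maximum clique $X_0$ of $H=G_1[S]$ together with $S$ is a weighted clique, forcing $\omega(H)+m\le k-1$, the identity $L(G,x^*)=\tfrac14(1-\|x^*\|^2)+\tfrac12 L(H,x^*)$, and the closed form $\tfrac12-1/\bigl(4\sum_i n_i/(n_i+1)\bigr)$ for complete multipartite $H$ are all correct, and your final arithmetic does reproduce $b_k$ in both parities (indeed it would give $e(G)\le b_kn^2$ with no $\ep n^2$ slack).

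Two steps, however, are not justified as written. The lesser one: the single rule you state (copy $u$'s $H$-neighbourhood onto $v$ when $uv\notin H$ and $u$ has the larger weighted degree) does not by itself terminate in a complete multipartite graph; you need the full Zykov argument---either the two-vertex step for a non-transitive triple $u\not\sim v\not\sim w$, $u\sim w$ with $v$ of maximal weighted degree, which increases $L(H,x^*)$ by at least $x_ux_w>0$ without raising $\omega$, or, cleaner, take $H'$ maximizing $L(\cdot,x^*)$ among the finitely many graphs on $S$ with clique number at most $\omega(H)$ and show it must be complete multipartite. The more serious one: your reduction to partitions of shape $(p,1,\ldots,1)$ is justified by ``concavity of $n\mapsto n/(n+1)$ picks out the unbalanced partition,'' and this is backwards: since that function is concave, for fixed $m$ and $w'$ the \emph{balanced} partition maximizes $\sum_i n_i/(n_i+1)$, while $(p,1,\ldots,1)$ minimizes it, so as stated the step would fail. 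What actually forces the extremal shape is the budget $\sum_i(n_i+1)\le k-1$: a part of size $n$ costs $n+1$ and pays $1-\tfrac1{n+1}$, so replacing a part of size $n\ge 3$ by parts of sizes $n-2$ and $1$ (same cost) gains $\tfrac12-\tfrac{2}{n^2-1}>0$, and replacing two parts of size $2$ by three parts of size $1$ gains $\tfrac16$; hence the optimum has all parts of size $1$, with exactly one part of size $2$ when $k$ is even (parity of the budget). With this exchange argument in place of the concavity claim, your tightness/parity analysis and the minimization of $p+4/(p+1)$ go through and the lemma follows; as it stands, though, the partition-shape reduction rests on a false assertion and the symmetrization step is only gestured at.
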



We will use the following multicolored version of the Szemer\'edi regularity lemma (for example, see~\cite{KS}). 

\begin{theorem}\label{thm-color}
For every $\ep>0$ and integer $r$, there exists an $M$ such that for every $n>M$ and every $r$-coloring of the edges of an $n$-vertex graph $G$ with colors $\{c_1,\ldots,c_r\}$, there exists a partition of $V(G)$ into sets $V_1,\ldots, V_m$ with $||V_i|-|V_j||\le 1$, for some $1/\ep<m<M$, which is $\ep$-regular with respect to $G_{c_i}$ for every $1\le i\le r$. 
\end{theorem}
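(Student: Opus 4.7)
The plan is to derive this multicolored regularity lemma by adapting the standard single-graph proof of Szemerédi's regularity lemma, using a potential function that aggregates the mean-square densities across all $r$ colors. The key observation is that the energy increment argument localizes to a single color at a time, while monotonicity under refinement holds simultaneously for every color.

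First, for any partition $\mathcal{P}=\{V_1,\ldots,V_m\}$ of $V(G)$ and any color $c\in\{c_1,\ldots,c_r\}$, I would define
$$q_c(\mathcal{P})=\sum_{1\le i<j\le m}\frac{|V_i||V_j|}{n^2}\,d_c(V_i,V_j)^2,\qquad q(\mathcal{P})=\sum_{c=1}^{r}q_c(\mathcal{P}),$$
where $d_c(V_i,V_j)$ is the density of $G_c$ between $V_i$ and $V_j$. Each $q_c$ lies in $[0,1/2]$, so $q(\mathcal{P})\in[0,r/2]$, giving an a priori bound on how much $q$ can be increased.

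Next, I would verify two standard facts. (i) \emph{Refinement monotonicity}: if $\mathcal{P}'$ refines $\mathcal{P}$, then $q_c(\mathcal{P}')\ge q_c(\mathcal{P})$ for every $c$ by the conditional-variance form of Jensen's inequality, so $q(\mathcal{P}')\ge q(\mathcal{P})$. (ii) \emph{Energy increment}: if $\mathcal{P}$ fails $\ep$-regularity with respect to $G_c$ for some single color $c$, i.e.\ more than $\ep m^2$ pairs $(V_i,V_j)$ are not $\ep$-regular in $G_c$, then Szemerédi's defect Cauchy--Schwarz argument, applied \emph{inside the color $c$ only}, produces a common refinement $\mathcal{P}'$ of $\mathcal{P}$ with $|\mathcal{P}'|\le m\cdot 2^{m}$ parts such that $q_c(\mathcal{P}')\ge q_c(\mathcal{P})+\ep^5$. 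Combined with (i) applied to the remaining colors, this yields $q(\mathcal{P}')\ge q(\mathcal{P})+\ep^5$. Iterating, the potential grows by $\ep^5$ at every step and is capped by $r/2$, so the procedure terminates in at most $r/(2\ep^5)$ rounds. The terminating partition is $\ep$-regular with respect to every $G_c$ simultaneously. Starting from an equipartition of size just above $1/\ep$, I would then apply the standard cleaning step that reassigns at most $m$ leftover vertices to achieve $||V_i|-|V_j||\le 1$; this perturbs densities by $O(1/m)$, which is absorbed by picking the initial regularity parameter a touch smaller. The resulting bound $M$ is the $r/(2\ep^5)$-fold iterate of $m\mapsto m\cdot 2^m$, a tower-type function of $1/\ep$ and $r$.

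The main obstacle to watch out for is that the energy increment in (ii) must be realized through a common refinement valid across all $r$ colors, rather than by refining color by color and losing control of the others. This is handled in the standard way: when correcting irregularity of color $c$, each part $V_i$ is subdivided using the union, over all pairs $(V_i,V_j)$ that are bad in color $c$, of the witnesses $A_{ij}\subseteq V_i$ to irregularity. The resulting single partition $\mathcal{P}'$ is a refinement of $\mathcal{P}$ to which monotonicity in the other $r-1$ colors applies verbatim, and the defect Cauchy--Schwarz bound in color $c$ goes through unchanged. Aside from this bookkeeping, the argument is a verbatim transcription of the classical proof with the potential $q$ replacing the usual mean-square density.
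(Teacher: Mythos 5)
The paper itself contains no proof of Theorem~\ref{thm-color}: it is quoted as a known multicoloured form of the regularity lemma with a pointer to the Koml\'os--Simonovits survey, so there is no in-paper argument to compare against. Your sketch is precisely the standard proof of that known result --- the aggregated potential $q=\sum_c q_c$ with $q_c\le 1/2$, refinement monotonicity for every colour simultaneously, a defect Cauchy--Schwarz increment of order $\ep^5$ obtained by refining with the witness sets of whichever colour currently has more than $\ep m^2$ irregular pairs, termination after at most $r/(2\ep^5)$ rounds, and a tower-type bound $M=M(\ep,r)$. In substance this is the same route as the cited source, with the only twist being that the energy is summed over the $r$ colours; that part of your plan is sound.

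The one step that would fail as written is the equitability bookkeeping. The witness refinements give parts of completely uncontrolled sizes, so after the last round the partition can be very far from balanced; achieving $||V_i|-|V_j||\le 1$ is then not a matter of ``reassigning at most $m$ leftover vertices,'' and a wholesale terminal rebalancing may move a constant fraction of the vertices of some parts, which changes pair densities by a constant and can destroy the regularity you have just produced. The standard remedy has to be built into the iteration rather than postponed: after each witness refinement, re-equitablize by chopping every part into pieces of one fixed size and placing the leftovers into an exceptional class kept of size at most $\ep n$ throughout (or redistributing them), and verify that this re-chopping costs only a fixed fraction of the energy gain, so the potential still increases by at least $\ep^5/2$ per round; the number of parts per round is then bounded by a function of $m$ alone, which is what makes $M$ well defined. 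With that modification --- and with the small exceptional class absorbed at the very end, which at that point genuinely affects only an $o(1)$ proportion of pairs --- your argument is complete and coincides with the standard proof of the multicoloured regularity lemma.
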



\section{Proof of Theorem~\ref{thm-2coloringreduction}}\label{sec-2coloring}
To overcome the obstacle that all subgraphs induced by each color could have linear-sized independent sets, we need the following simple, but somewhat surprising observation.
\begin{lemma}\label{lem-partition}
For every $0<c<1$, $r\ge 2$, and $a\le a_r(c):=c^{3\cdot 2^{r-2}-1}$ the following holds. Let $G$ be an $n$-vertex graph with $\al(G)\le an$ and an $r$-edge-coloring $C:E(G)\rightarrow \{c_1,\ldots,c_r\}$. Then there exists a partition $V(G)=C_1\cup\ldots\cup C_r$ such that $\al(G_{c_i}[C_i])\le cn$ for every $1\le i\le r$.
\end{lemma}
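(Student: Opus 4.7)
The plan is to induct on $r$, using the same ``peel-off'' strategy in both the base case and the inductive step: one iteratively removes maximum $c_1$-independent sets from the ambient vertex set until the leftover piece no longer contains any large $c_1$-independent set.

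For the base case $r = 2$, I would greedily pick disjoint $c_1$-independent sets $I_1, I_2, \ldots, I_k$ with $I_j$ a maximum $c_1$-independent set in $V(G) \setminus (I_1 \cup \cdots \cup I_{j-1})$, stopping when the next such set would have at most $cn$ vertices. Setting $C_2 := \bigcup_j I_j$ and $C_1 := V(G) \setminus C_2$ gives $\al(G_{c_1}[C_1]) \le cn$ by construction, and since each $|I_j| > cn$ we get $k < 1/c$. For $C_2$, the key observation is that any $c_2$-independent set $J \subseteq C_2$ meets each $I_j$ in a set with no $c_1$-edges (because $I_j$ is $c_1$-independent) and no $c_2$-edges (because $J$ is $c_2$-independent), so $J \cap I_j$ is actually $G$-independent with $|J \cap I_j| \le \al(G) \le c^2 n$; summing over the $k < 1/c$ indices gives $|J| < cn$.

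For the inductive step $r \ge 3$, I set $a := a_{r-1}(c)$, so that the stated exponent $3 \cdot 2^{r-2}-1$ is exactly what the recursion $a_r(c) = c \cdot a^2$ produces. I would perform the same peel-off to obtain $I_1, \ldots, I_k$, define $C_1 := V(G) \setminus \bigcup_j I_j$ and $W := \bigcup_j I_j$, and then consider the graph $H$ on $W$ consisting of the $c_2, \ldots, c_r$-colored edges of $G[W]$ with its induced $(r-1)$-coloring. The crucial estimate mirrors the base case: for any $H$-independent set $J \subseteq W$, each $J \cap I_j$ is $G$-independent, of size at most $ca^2 n$, giving
\[
    \al(H) \;\le\; k \cdot c a^2 n \;\le\; a^2 n.
\]
If $|W| \ge an$, then $\al(H) \le a \cdot |W|$ and the inductive hypothesis applied to $H$ produces a partition $W = C_2 \cup \cdots \cup C_r$ with $\al(H_{c_i}[C_i]) \le c|W| \le cn$ for each $i$; and if $|W| < an$, then since $a = c^{3 \cdot 2^{r-3}-1} \le c^2 < c$ for $r \ge 3$, any partition of $W$ into $r-1$ parts is automatically good.

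The main obstacle is the key estimate $\al(H) \le a^2 n$: one factor of $a$ comes from the hypothesis $\al(G) \le a_r n = c a^2 n$, while the other factor is ``paid for'' by the bound $k < 1/c$ on the number of peeled-off pieces. This tight interplay is precisely what forces the recursion $a_r(c) = c \cdot a_{r-1}(c)^2$ and hence the claimed exponent $3 \cdot 2^{r-2}-1$. Everything else is careful bookkeeping, and the small-$|W|$ case is handled trivially because $a_{r-1}(c) < c$ whenever $r \ge 3$.
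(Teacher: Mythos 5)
Your proof is correct and is essentially the paper's argument: induction on $r$, iteratively peeling off maximum $c_1$-independent sets (each of size $>cn$, hence at most $1/c$ of them), and the key observation that a set avoiding the remaining colors inside a peeled piece is independent in $G$, so has at most $\alpha(G)\le an$ vertices. The only deviation is organizational --- the paper applies the $(r-1)$-color hypothesis separately to each peeled piece $G[I]$ and sums the resulting independence numbers over the at most $1/c$ rounds, whereas you bound the independence number of the union $W$ in the colors $c_2,\dots,c_r$ directly and invoke the hypothesis once (with the case split comparing $|W|$ to $a_{r-1}(c)\,n$); both bookkeepings yield the same recursion $a_r(c)=c\cdot a_{r-1}(c)^2$ and hence the stated exponent.
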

\begin{proof}
We fix a $c>0$, and use induction on the number of colors $r$. For the base case when $r=2$ and $a\le c^2$, if $\al(G_{c_i})\le cn$, for some $i\in\{1,2\}$, then we can partition $V(G)$ into $C_i=V(G)$ and $C_j=\emptyset$, where $j=\{1,2\}\setminus\{i\}$, finishing the proof. Therefore, we may assume $\al(G_{c_1})> cn$ and $\al(G_{c_2})> cn$. Let $X_0=V(G)$, $Y_0=\emptyset$. We iterate the following operation for $i\ge 1$. At step $i$, if $\al(G_{{c_1}}[X_{i-1}])\le cn$ then we will stop. Otherwise, let $I$ be a maximum independent set in $G_{c_1}[X_{i-1}]$. Since $\al(G)\le an$, we have $\al(G_{c_2}[I])\le an$. We define $X_i:=X_{i-1}\setminus I$ and $Y_i:=Y_{i-1}\cup I$. Notice that $\al(G_{c_2}[Y_i])\le \al(G_{c_2}[Y_{i-1}])+an$. Suppose the iteration stops after $k$ steps, i.e.~$\al(G_{c_1}[X_k])\le cn$, then $k\le \frac{n}{cn}=1/c$, which implies that $\al(G_{c_2}[Y_k])\le k\cdot an\le cn$ as desired.

For the inductive step, let us assume that the lemma holds for $r-1$ colors, where $r\ge 3$. In particular, we assume that for every $a>0$, $n'$-vertex graph $H$ with $\al(H)\le an'$, and $(r-1)$-edge-coloring of $H$ with colors $c'_1,\ldots,c'_{r-1}$, there exists a partition of $V(H)=C'_1\cup\ldots\cup C'_{r-1}$ such that $\al(H_{c'_i}[C'_i])\le a^{1/(3\cdot 2^{r-3}-1)}n'$ for all $1\le i\le r-1$.

Now, we will prove the lemma for $r$ colors. Fix an arbitrary $r$-edge-coloring of $G$, 
we can assume that $\al(G_{c_1})> cn$, otherwise $C_1=V(G)$ and $C_2=\ldots=C_r=\emptyset$. Let $C_{1,0}=V(G)$ and $C_{i,0}=\emptyset$ for all $2\le i\le r$. 
We iterate the following operation. At step $k$, if $\al(G_{c_1}[C_{1,k-1}])\le cn$ then we will stop. 
Otherwise, let $I$ be a maximum independent set of $G_{c_1}[C_{1,k-1}]$ with $n'> cn$ vertices. Since $\al(G)\le a\cdot n$, for some constant $a\le a_r(c)$, we have 
$\al(G_{c_2}[I]\cup\ldots\cup G_{c_r}[I])\le an<an'/c$. We can apply the induction hypothesis to the graph $G[I]$. Therefore, there exists a partition of $I=I_2\cup\ldots\cup I_r$ such that for every $2\le i\le r$, we have
\begin{eqnarray}\label{eq-indnum}
\al(G_{c_i}[I_i])\le\left(\frac{a}{c}\right)^{\frac{1}{3\cdot 2^{r-3}-1}}n'\le c^{\frac{3\cdot 2^{r-2}-2}{3\cdot 2^{r-3}-1}}n'\le c^{\frac{3\cdot 2^{r-2}-2}{3\cdot 2^{r-3}-1}}n.
\end{eqnarray}
Then, we define $C_{1,k}:=C_{1,k-1}\setminus I$ and $C_{i, k}:=C_{i,k-1}\cup I_i$, for $2\le i\le r$. By~\eqref{eq-indnum}, 
\begin{eqnarray*}
\al(G_{c_i}[C_{i,k}])\le \al(G_{c_i}[C_{i,k-1}])+c^{\frac{3\cdot 2^{r-2}-2}{3\cdot 2^{r-3}-1}}n. 
\end{eqnarray*}
Let us assume that the iteration stops after $l$ steps, i.e.~$\al(G_{c_1}[C_{1,l}])\le cn$. Note that $l\le \frac{n}{cn}=1/c$, which implies that for every $2\le i\le r$,
\begin{eqnarray*}
\al(G_{c_i}[C_{i,l}])\le l\cdot c^{\frac{3\cdot 2^{r-2}-2}{3\cdot 2^{r-3}-1}}n\le\frac{1}{c}\cdot c^{\frac{3\cdot 2^{r-2}-2}{3\cdot 2^{r-3}-1}}n=c^{\frac{3\cdot 2^{r-2}-2}{3\cdot 2^{r-3}-1}-1}n=cn.
\end{eqnarray*}
\end{proof}

\begin{proof}[Proof of Theorem~\ref{thm-2coloringreduction}]
(Lower bound) Fix an arbitrary $t\ge 1$. For each $i=1,2,3$, by Theorem~\ref{thm-RT}, there is a $\floor{\frac{4t+i}{2}}$-partite graph $G_i$, with partite sets $\{V_1,\ldots,V_{\lfloor(4t+i)/2\rfloor}\}$, in $\cH(n,4t+i)$ from Construction~\ref{const-oddeven} such that $e(G_i)=\RT(n,4t+i,o(n))=(b_{4t+i}+o(1))n^2$. We take the following set of colorings. 

\begin{itemize}
	\item $i=1$: Set $V(G_1)=V_1\cup \ldots\cup V_{2t}$. Color $G_1[V_p]$, for $1\le p\le t$, red, color $G_1[V_q]$, for $t+1\le q\le 2t$, blue, and all cross-edges in $G_1[V_p,V_q]$, $1\le p<q\le 2t$, in either red or blue.
	
	\item $i=2$: Set $V(G_2)=V_1\cup \ldots\cup V_{2t+1}$. Color $G_2[V_p]$, for $1\le p\le t+1$, red, color $G_2[V_q]$, for $t+2\le q\le 2t+1$, blue, and all cross-edges in $G_2[V_p,V_q]$, $1\le p<q\le 2t+1$, in either red or blue.
	
	\item $i=3$: Set $V(G_3)=V_1\cup \ldots\cup V_{2t+1}$. Color $G_3[V_p]$, for $1\le p\le t$, red, color $G_3[V_q]$, for $t+1\le q\le 2t+1$, blue, and all cross-edges in $G_3[V_p,V_q]$, $1\le p<q\le 2t+1$, in either red or blue.
	
\end{itemize}

In all three cases, the total number of edges inside all $V_i$'s is $o(n^2)$. Therefore, the total number of cross-edges is $RT(n,4t+i,o(n))-o(n^2)$, which implies that we obtain $2^{\RT(n,4t+i,o(n))-o(n^2)}$ $2$-edge-colorings. Hence, we are left to show that all these colorings are monochromatic $K_{3t+i}$-free. For $i=1,3$, note that every blue (red~resp.) clique have at most one vertex from each $V_p$ ($V_q$~resp.), and at most two vertices from each $V_q$ ($V_p$~resp.). Hence, the size of the largest blue (red~resp.) clique is at most $t+2\cdot(t+\floor{\frac{i}{2}})<3t+i$ ($2t+t+\floor{\frac{i}{2}}<3t+i$~resp.). For the case when $i=2$, fix arbitrary $p,q$ such that $1\le p\le t+1$ and $t+2\le q\le 2t+1$. Note that to get a blue clique, we can have at most 1 vertex from each $V_p$ and 2 vertices from each $V_q$, hence, the largest blue clique has size at most $1\cdot(t+1)+2\cdot t=3t+1$. For a red clique, we can have at most 1 vertex from each $V_q$, at most a $K_3$ from $V_1\cup V_2$, and at most 2 vertices from each $V_p$, $p\neq 1,2$. Thus, the largest red clique is of size at most $3+2\cdot (t-1)+1\cdot t=3t+1$.

\medskip

(Upper bound) We will prove that for a given constant $\ep>0$, there exists $\ga>0$ such that, for  sufficiently large $n$, the following holds. Let $G$ be an $n$-vertex graph with $\al(G)\le \ga^2 n$, the number of $2$-edge-colorings of $G$ without a monochromatic $K_{3t+i}$ is at most $2^{(b_{4t+i}+\ep)n^2}$, for $t\ge 1$ and $i=1,2,3$. Also, the number of $2$-edge-colorings with no monochromatic $K_3$ is at most $2^{\ep n^2}$. Throughout the proof, constants are chosen from right to left according to the following hierarchy:
\begin{eqnarray}\label{eq-hierarchy}
0<\ga\ll\ep_2\ll\frac{1}{n_1}\ll\ep_1\ll\ep<1.
\end{eqnarray}

Let $n_0$ be the constant returned from Lemma~\ref{lem-weightedclique} with $\ep_1$ playing the role of $\ep$ and choose $n_1\ge n_0$. Let $\de$ be the constant returned from Lemma~\ref{lem-completefree2} with $\ep_2$ playing the role of $\ep$ and choose $\ga<\de$.

For any fixed $2$-edge-coloring of $G$, $\phi: E(G)\rightarrow\{\phi_1,\phi_2\}$, apply Lemma~\ref{lem-partition} with $r=2$, $c=\ga$, and $a_2(c)=\ga^2$. Let $\{A,B\}$ be the resulting partition such that 
\begin{eqnarray}\label{eq-part}
\al(G_{\phi_1}[A])\le \ga n, \quad\mbox{ and }\quad \al(G_{\phi_2}[B])\le \ga n.
\end{eqnarray}
We then apply Theorem~\ref{thm-color}, with $\ep_2$ playing the role of $\ep$, to $G$ with coloring $\phi$, and let $\cP=\{P_1,\ldots,P_m\}$ be the resulting partition of $V(G)$, where $m\ge 1/\ep_2$. Note that we may assume the regularity partition $\cP$ refines the $\{A,B\}$-partition. Let $R_{\phi_1}$ and $R_{\phi_2}$ be the $\phi_1$-colored and $\phi_2$-colored weighted cluster graphs respectively, both on vertex set $\{p_1,\ldots,p_m\}$, where the vertex $p_i$ represents the vertex set $P_i$, for all $i\in [m]$. We have
\begin{eqnarray}\label{eq-partitioning}
 \mbox{number of ways to fix an }\{A,B\}\mbox{-partition of }V(G) &\le&2^n,\nonumber\\
 \mbox{number of ways to fix a }\cP\mbox{-partition of }V(G) &\le&m^n,\nonumber\\
  \mbox{number of ways to fix }R_{\phi_1}\mbox{ and } R_{\phi_2} &\le&\left(2^{m\choose 2}\right)^4.
 \end{eqnarray}

Now, we will count the number of colorings with a fixed $\{A,B\}$-partition, $\cP$-partition and weighted cluster graphs $R_{\phi_1}$ and $R_{\phi_2}$. First, note that the number of edges of the graph $G$ with both ends in one of the $P_i$'s, between irregular or sparse pairs is at most 
\begin{eqnarray}\label{eq-removededges}
m\cdot\left(\frac{n}{m}\right)^2+\ep_2\cdot m^2\cdot\left(\frac{n}{m}\right)^2+10\cdot\ep_2\cdot m^2\cdot\left(\frac{n}{m}\right)^2\le \ep_2n^2+\ep_2n^2+10\cdot\ep_2n^2.
\end{eqnarray}
Hence, the number of ways to color these edges is at most $2^{12\ep_2n^2}$. From now on, we will only consider the rest of the edges of $G$, i.e.~the edges between pairs of clusters that are adjacent in $R_{\phi_1}\cup R_{\phi_2}$. Note that there is a unique way to color edges in $R_{\phi_1}\De R_{\phi_2}$. Thus the number of $2$-edge-colorings corresponding to the fixed $\{A,B\}$-partition, $\cP$-partition and weighted cluster graphs $R_{\phi_1}$ and $R_{\phi_2}$ is at most 
\begin{eqnarray}\label{eq-clusteredges}
2^{\left(\frac{n}{m}\right)^2e(R_{\phi_1}\cap R_{\phi_2})+12\ep_2n^2}.
\end{eqnarray}
To complete the proof, it remains to show that 
\begin{itemize}[noitemsep,topsep=0pt]
	\item[(i)]\label{itm-3} when $\phi$ is monochromatic $K_3$-free, $e(R_{\phi_1}\cap R_{\phi_2})=0$, and

	\item[(ii)]\label{itm-3t+i} when $\phi$ is monochromatic $K_{3t+i}$-free for $t\ge 1$ and $i=1,2,3$,
\end{itemize}
\begin{eqnarray}\label{eq-mainedges}
e(R_{\phi_1}\cap R_{\phi_2})\le  (b_{4t+i}+\ep_1)\cdot m^2,
\end{eqnarray} 
where $b_{4t+i}$ is defined in~\eqref{eq-bk}. Indeed, since the choice of $G$ is arbitrary, (i) together with~\eqref{eq-partitioning} and~\eqref{eq-clusteredges}, implies 
\begin{eqnarray*}
\RF(2,3,\ga^2n)&\le& 2^n\cdot m^n\cdot 2^{4\cdot{m\choose 2}}\cdot 2^{12\ep_2n^2}\le 2^{\ep n^2},
\end{eqnarray*}
and~(ii), together with~\eqref{eq-partitioning} and~\eqref{eq-clusteredges}, implies
\begin{eqnarray*}
\RF(2,3t+i,\ga^2n)&\le& 2^n\cdot m^n\cdot 2^{4\cdot{m\choose 2}}\cdot 2^{(b_{4t+i}+\ep_1) n^2+12\ep_2n^2}\le 2^{(b_{4t+i}+\ep) n^2}.
\end{eqnarray*}
To see (i), notice that if there is an edge, say $uv\in E(R_{\phi_1}\cap R_{\phi_2})$, then, without loss of generality, we may assume that $u\in A$. Therefore, by setting $X=\{u\}$ and $Y=\{u,v\}$, it follows from~\eqref{eq-part} that we have a $\phi_1$-colored weighted $(1,2)$-clique $(X,Y)$ with $\al(G_{\phi_1}[X])\le \ga n$, which by Lemma~\ref{lem-completefree2} yields a monochromatic $K_3$ in $\phi$, a contradiction.

For (ii), suppose that~\eqref{eq-mainedges} is not satisfied. Since $m>1/\ep_2>n_1$, we can apply Lemma~\ref{lem-weightedclique} to the graph $R_{\phi_1}\cap R_{\phi_2}$, with $\ep_1$ playing the role of $\ep$. Hence, the graph $R_{\phi_1}\cap R_{\phi_2}$ has a weighted complete subgraph $(X,Y)$ of size $4t+i$, and we shall find a monochromatic $K_{3t+i}$ in $G$ using $(X,Y)$, which is a contradiction. Let $x=|X|$, $y=|Y|$ and $X=\{p_1\cup\ldots\cup p_{x}\}$. Without loss of generality, we may assume that $p_1\cup\ldots\cup p_{\ceiling{x/2}}:=X'\subseteq A$, i.e.~$\al(G_{\phi_1}[P_1\cup\ldots\cup P_{\ceiling{x/2}}])\le\ga n$. We have thus found a weighted clique $(X',Y)$ in $G_{\phi_1}$ such that $\al(G_{\phi_1}[X'])\le\ga n$. Hence, Lemma~\ref{lem-completefree2} shows that $G_{\phi_1}$ contains a copy of $K_{{\ceiling{x/2}}+y}$. 
\begin{claim}\label{cl-x-small}
$\lfloor \frac{x}{2}\rfloor\le t$.
\end{claim}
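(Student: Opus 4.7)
The plan is to derive Claim~\ref{cl-x-small} directly from the defining structural property of a weighted complete subgraph. By Definition~\ref{def-weight}, a weighted clique $(X, Y)$ of size $\ell$ requires $X \subseteq Y$ together with $|X| + |Y| = \ell$. Here, the weighted clique $(X, Y)$ was produced by Lemma~\ref{lem-weightedclique} in $R_{\phi_1} \cap R_{\phi_2}$ and has size $4t + i$, so in particular $X \subseteq Y$ gives $x \le y$, and $x + y = 4t + i$.

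Combining these, $2x \le x + y = 4t + i$. Since $x$ is a non-negative integer and $i \in \{1, 2, 3\}$, this forces $x \le 2t + 1$, which immediately yields $\lfloor x/2 \rfloor \le t$.

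I do not anticipate a substantive obstacle: the claim is essentially a one-line consequence of the definitions. Its purpose is to match the clique size produced by Lemma~\ref{lem-completefree2}. Indeed, applied to the weighted clique $(X', Y)$ in $G_{\phi_1}$ with $|X'| = \lceil x/2 \rceil$ and $X' \subseteq A$, the lemma gives a copy of $K_{\lceil x/2 \rceil + y}$ in $G_{\phi_1}$. Using $y = 4t + i - x$, this clique has size $4t + i - \lfloor x/2 \rfloor$, which by the claim is at least $3t + i$, producing a monochromatic $K_{3t + i}$ and the desired contradiction with our assumption on $\phi$.
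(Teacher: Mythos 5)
Your proof is correct and uses exactly the same ingredients as the paper ($X\subseteq Y$ gives $x\le y$, $x+y=4t+i$, $i\le 3$, and integrality of $x$); the paper merely phrases it as a contradiction from $\lfloor x/2\rfloor\ge t+1$ while you argue directly. No issues.
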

\begin{proof}
Suppose that $\lfloor \frac{x}{2}\rfloor\ge t+1$. Recall from the definition of weighted clique that $X\subseteq Y$, i.e.~$x\le y$, and $x+y=4t+i$. Thus,
$$4t+3\ge 4t+i=x+y\ge 2x\ge 4\left\lfloor \frac{x}{2}\right\rfloor\ge 4(t+1),$$
a contradiction.
\end{proof}
Claim~\ref{cl-x-small} then implies that the monochromatic clique corresponding to $(X',Y)$ we found in $G_{\phi_1}$ is of order
\begin{eqnarray}\label{eq-cliquesize}
{\ceiling{\frac{x}{2}}}+y=x+y-\floor{\frac{x}{2}}\ge 4t+i-t=3t+i,
\end{eqnarray}
a contradiction.
\end{proof}


\section{Proof of Theorems~\ref{thm-k3} and~\ref{thm-ks}}\label{sec-ks}
\begin{proof}[Proof of Theorem~\ref{thm-ks}](Lower bound) Let $D_1,\ldots,D_{n/s}$ be a partition of the high-dimensional unit sphere of equal measure with small diameter as in the Bollob\'as-Erd\H os graph construction. Let $G$ be an $n$-vertex graph with a balanced vertex partition $V_1$, $\ldots$, $V_s$, where each $V_i$ consists of one point from each of the $n/s$ domains $D_1,\ldots,D_{n/s}$. For every pair of distinct integers $i,j\in [s]$, let $G[V_i\cup V_j]$ be a copy of $\BE(V_i,V_j)$. Note that each $G[V_i]$ is triangle-free and each $G[V_i\cup V_j]$ is $K_4$-free. We claim that $G$ is $K_{s+2}$-free. Indeed, let $F$ be a largest clique in $G$ and let $g_i=|V(F)\cap V_i|$. Since $G[V_i]$ is triangle-free, each $g_i\le 2$. If $|V(F)|\ge s+2$, then there exists at least two indices $p,q$ such that $g_p=g_q=2$, which contradicts to $G[V_p,V_q]$ being $K_4$-free. We will count the number of $K_s$ with exactly one vertex from each $V_i$. Fix a vertex $v_1\in V_1$, a uniformly at random chosen $v_2\in V_2$ is adjacent to $v_1$ if $v_2$ is in the cap (almost a hemisphere) centered at $v_1$ with measure $1/2-o(1)$, which happens with probability $1/2-o(1)$. Now we fix a clique on vertex set $\{v_1,\ldots, v_{\ell-1}\}$ with $\ell\ge 2$ and $v_i\in V_i$. The number of vertices in $V_{\ell}$ that are in $\bigcap_{i=1}^{\ell-1}N(v_i)$ is at least $2^{-(\ell-1)}n/s-o(n)$. Therefore, we have
\begin{eqnarray*}
k_s(G)\ge\prod_{i=1}^{s}\left[\left(2^{-(i-1)}-o(1)\right)\frac{n}{s}\right]=\left(2^{-{s\choose 2}}-o(1)\right)\left(\frac{n}{s}\right)^s.
\end{eqnarray*}

(Upper bound) We will prove that for a given $\ep>0$ and integer $s\ge 3$, there exists $\ga>0$ such that for any sufficiently large $n$ the following holds. Let $G$ be an $n$-vertex $K_{s+2}$-free graph with $\al(G)\le \ga n$, the number of edges of copies of $K_s$ in  $G$ is at most $(2^{-{s \choose 2}}+\ep)(n/s)^s$.  Throughout the proof, constants are chosen from right to left according to the following hierarchy,
\begin{eqnarray*}
0<\ga\ll\ep_1\ll\ep<1.
\end{eqnarray*}

Let $\ep_1$ play the role of $\ep$ in Lemma~\ref{lem-completefree2}, and choose $\ga$ such that it is smaller than the resulting $\de$. First, we apply Theorem~\ref{thm-color}, with $\ep_1$ playing the role of $\ep$, to the graph $G$ and let $\cP=\{P_1,\ldots,P_m\}$ be the resulting partition of $V(G)$, where $m\ge 1/\ep_1$, and let $R$ be the weighted cluster graph with respect to $\ep_1$. We call an edge in $R$ \emph{heavy} if it has weight $1$. We claim that the graph $R$ does not contain any weighted $(1,s+1)$- or $(2,s)$-clique. Otherwise, we apply Lemma~\ref{lem-completefree2} to the graph $R$ with $\ep_1$ playing the role of $\ep$. Since $\al(G[X])\le\al(G)\le \ga(n)\le \de_1(n)$, $G$ contains a copy of $K_{s+2}$, a contradiction. In other words, we have that $R$ is $K_{s+1}$-free and does not have a copy of $K_{s}$ with at least one heavy edge.

Now, we can count the total number of copies of $K_s$ in $G$. Note that similarly to~\eqref{eq-removededges}, the total number of edges inside all clusters, between irregular pairs, or sparse pairs is at most $12\cdot\ep_1n^2$. Therefore, the total number of copies of $K_s$ with at least one such edge is at most $12\cdot\ep_1n^2\cdot n^{s-2}$. Since $R$ is $K_{s+1}$-free, by the result of Erd\H os~\cite{Erd}, it has at most $(m/s)^s$ copies of $K_s$. Also, since $R$ does not have a copy of $K_s$ with a heavy edge, it implies that each $K_s$ in $R$ has weight at most $(1/2+10\cdot\ep_1)^{s\choose 2}\le 2^{-{s\choose 2}}+\ep/2$, where the last inequality holds because $\ep_1$ is sufficiently small with respect to $\ep$.  Hence, we have that the number of $K_s$ in $G$ is at most
\begin{eqnarray*}
	\left(2^{-{s\choose 2}}+\frac{\ep}{2}\right)\cdot\left(\frac{n}{m}\right)^s\cdot\left(\frac{m}{s}\right)^s+12\cdot\ep_1n^s\le
\left(2^{-{s\choose 2}}+\ep\right)\left(\frac{n}{s}\right)^s.
\end{eqnarray*}
\end{proof}

The following lemma is the main step for proving Theorem~\ref{thm-k3}.

\begin{lemma}\label{lem-triangle}
For every integer $t\ge 4$ and $n$-vertex weighted graph $G=(V,E,w)$ (as in Definition~\ref{def-weight}) with no weighted complete subgraph of size $t$, we have
\begin{eqnarray*}
T(G)\le a_{t}n^3,
\end{eqnarray*}
where $a_t$ is as in~\eqref{eq-al}.
\end{lemma}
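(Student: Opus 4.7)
The plan is to reduce to a canonical extremal structure via Zykov-style symmetrization and then perform a finite-dimensional optimization matching Construction~\ref{const-oddeven}. Fix an $n$-vertex weighted $G$ with no weighted $K_t$-clique, among those maximizing $T(G)$. Write $T(v)$ for the sum of weighted triangles through $v$. For any pair $u,v$ with $w(uv)=0$, the cloning operation that replaces the weight-vector of the vertex with smaller $T(\cdot)$ by that of the other preserves the weighted $K_t$-clique-free property (any new clique $(X,Y)$ would require $u,v\in Y$, contradicting $w(uv)=0$) and does not decrease $T(G)$. Iterating (with lateral moves when $T(u)=T(v)$) yields a partition $V(G)=V_1\cup\cdots\cup V_m$ such that each $V_i$ has zero internal weights and the weight $w(uu')$ for $u\in V_i, u'\in V_j$ depends only on $(i,j)$. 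Hence $G$ is a weighted blow-up of an $m$-vertex template $R$ with sizes $n_i=|V_i|$. Lifting a hypothetical clique in $R$ to $G$ via one representative per cluster shows that $R$ is itself weighted $K_t$-clique-free.

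Triangles with two vertices in the same cluster have zero weight, so
\[
T(G)=\sum_{\{i,j,k\}}n_in_jn_k\cdot w(ij)w(jk)w(ik),
\]
the sum ranging over triangles in $R$. The task becomes maximizing this trilinear form over weighted $K_t$-free templates $R$ on $m$ vertices and sizes $(n_i)$ with $\sum n_i=n$. We claim the optimum corresponds to the reduced Construction~\ref{const-oddeven}: for $t=2\ell+1$, $R=K_\ell$ with all edge weights $1$ and balanced sizes $n_i=n/\ell$; for $t=2\ell$, $R=K_\ell$ with a single distinguished edge of weight $1/2$ between two parts (modeling the Bollob\'as--Erd\H os structure) and sizes $|V_1|=|V_2|=xn/2$, $|V_3|=\cdots=|V_\ell|=(1-x)n/(\ell-2)$ for the optimal $x\in[0,1]$. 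Substituting these parameters yields precisely $a_tn^3$ from~\eqref{eq-al}.

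The main obstacle is characterizing the extremal template $R$, i.e., ruling out configurations with multiple weight-$1/2$ edges or with more than $\ell$ positively-sized parts. Any weight-$1/2$ edge in $R$ whose promotion to weight~$1$ would not introduce a new weighted $K_t$-clique can be promoted, strictly increasing $T(G)$; careful case analysis on the decompositions $(|X|,|Y|)$ with $|X|+|Y|=t$ shows that at most one weight-$1/2$ edge can survive (for even $t$) or none (for odd $t$). A further Zykov-like argument at the template level, combined with the weighted $K_t$-freeness of $R$, consolidates parts to at most $\ell$. With $R$ identified, the size optimization over $(n_i)$ is a standard Lagrangian calculation yielding exactly the formula in~\eqref{eq-al}.
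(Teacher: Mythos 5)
Your first step (Zykov-type cloning over pairs of weight $0$, reducing $G$ to a weighted blow-up of a template $R$ that is itself free of weighted cliques of size $t$) is essentially the paper's first symmetrization, and that part is sound. The genuine gap is in the step you describe as "the main obstacle": you never actually characterize the extremal template, and the mechanism you propose for doing so does not work. Promoting a weight-$1/2$ edge to weight $1$ whenever this creates no weighted $K_t$ only yields a template that is \emph{locally} maximal under single-edge promotions, and such templates can be very far from the claimed structure. Concretely, for odd $t=2\ell+1$ take $R$ to be $2\ell-1$ template vertices with \emph{all} edges of weight $1/2$: its largest weighted clique $(X,Y)$ has size at most $1+(2\ell-1)=2\ell<t$, while promoting any single edge creates a weighted clique of size $2+(2\ell-1)=t$, so no promotion is available; yet this template has ${2\ell-1\choose 2}$ edges of weight $1/2$, contradicting your claim that "none survive" for odd $t$. (An analogous all-$1/2$ template on $2\ell-2$ vertices defeats the claim for even $t$.) Ruling out such configurations is exactly the quantitative heart of the lemma, and your proposal replaces it by the unproved assertions "careful case analysis shows at most one weight-$1/2$ edge survives" and "a further Zykov-like argument consolidates parts to at most $\ell$"; note that weighted $K_t$-freeness alone allows up to $t-1$ parts when weights are $1/2$, so the consolidation claim also needs a real comparison argument, not just clique-freeness.

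For contrast, the paper handles this by a \emph{second} symmetrization, performed across pairs of clusters joined by weight-$1/2$ edges: it proves (their Claim on $S_2$) that for an extremal $G$ one may replace one cluster's weight profile by the other's without losing triangles, the key inequality being a convexity estimate of the form $\sum_k\left(w(A_iA_k)-w(A_jA_k)\right)^2|A_k|\ge 0$ together with a delicate comparison of the quantities $|V_{1,1/2}|$ and $|V_{1/2,1}|$. This makes "weight $<1$" an equivalence relation, and then two further splitting arguments (each coarse class contains at most two fine classes, and at most one coarse class contains two), proved by explicit before/after counts of $e(U)$ and $T(U)$ with case analysis, pin down the template to the shape of Construction~\ref{const-oddeven}; only then does the size optimization give $a_t n^3$. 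Your proposal would need replacements for all of these steps; as written, the reduction to the claimed template is asserted rather than proved, so the proof is incomplete at its central point.
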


\begin{proof}
Let $G=(V,E,w)$ be an $n$-vertex weighted graph that satisfies the hypothesis and is extremal, i.e.~has the maximum number of triangles. First, we will apply two rounds of the so-called symmetrization method to the graph 
$G$. For $v,v'\in V(G)$, denote $T_v(G)=\sum_{v\in T\in G}w(T)$, the number of weighted triangles containing $v$. Similarly, define $T_{vv'}(G)=\sum_{v,v'\in T\in G}w(T)$. Let $V(G)=\{v_1,\ldots,v_n\}$ such that $T_{v_1}(G)\ge\ldots\ge T_{v_n}(G)$. Define $S_1(i,j)$, for $i\neq j\in [n]$ to be the following operation: if $v_iv_j\notin G_{1/2}$ then we replace $v_j$ 
with a copy of $v_i$, i.e.~change $w(v_jv_k)$ to $w(v_i,v_k)$ for all $k\neq i,j$. If $i<j$, then the number of triangles changes by $T_{v_i}(G)-T_{v_j}(G)\ge 0$. Since $G$ is extremal, we have that $T_{v_i}(G)=T_{v_j}(G)$ for any $v_iv_j\notin G_{1/2}$. Consequently, the following process, denoted by $S_1$, is finite: apply $S_1(i,j)$ for every $1\le i<j\le n$ with $v_iv_j\notin G_{1/2}$. Note that $S_1$ will not increase the weighted clique number and keep the same number of triangles. After $S_1$, in the resulting graph 
$v_iv_j\notin G_{1/2}$ is an equivalence relation. Denote by $\cA=\{A_1,\ldots, A_m\}$ the equivalence 
classes of this relation, i.e.~two vertices $u$ and $v$ are in the same class if and only if $uv\notin 
G_{1/2}$. Therefore, for fixed $1\le i,j\le m$, all the edges between $A_i$ and $A_j$ have equal 
weights, which we denote by $w(A_iA_j)$, and for all vertices $x,x'\in A_i$ and $y,y'\in A_j$, we have
\begin{eqnarray*}
T_x(G)=T_{x'}(G)\quad\mbox{and}\quad T_{xy}(G)=T_{x'y'}(G).
\end{eqnarray*}
Therefore, we can define $T_{A_i}(G)=T_x(G)$ and $T_{A_iA_j}(G)=T_{xy}(G)$.
  Note that if $(X,Y)$ is one of the largest weighted complete 
subgraphs of $G$, then $|Y|=m$. 

We  summarize the structure of $G$ as follows: Let $H$ be a weighted complete graph on vertex set $\{a_1,\ldots,a_m\}$ with all its edges having weight either $1$ or $1/2$, and $w(a_ia_j)=w(A_iA_j)$.  The graph $G$ is a blow-up of $H$ where we replace each $a_i$ with a set of $|A_i|$ vertices, and inside each $A_i$ the weight of all edges is zero. 

Our next goal is to show that a second round of symmetrization  can be carried out in $G$, in other words, in $H$, $w(a_ia_j)=1/2$ is an equivalence relation. Without loss of generality we may assume $T_{A_1}(G)\ge\ldots\ge T_{A_m}(G)$. For every $1\le i,j\le m$, define $S_2(i,j)$ to be the following operation: Change $w(A_jA_k)$ to $w(A_iA_k)$ for all $k\neq i,j$, and denote $G_{A_i}$ the resulting graph. Define $G_{A_j}$ analogously as the graph obtained from applying $S_2(j,i)$ to $G$. The following claim states that when $w(A_iA_j)=1/2$, we can replace vertices in $A_i$ with copies of vertices in $A_j$, or the other way around, without decreasing the number of triangles.
\begin{claim}\label{cl-2-symm}
For every pair of integers $1\le i<j\le m$ with $w(A_iA_j)=1/2$,

(i) $T_{A_i}(G)=T_{A_j}(G)$;

(ii) $k_3(G_{A_i})= k_3(G_{A_j})= k_3(G)$.
\end{claim}
\begin{proof}
Define 
\begin{eqnarray*}
T^o_{A_i}(G)=T_{A_i}(G)-T_{A_iA_j}(G)\quad\mbox{ and }\quad G'=G\setminus \{A_i\cup A_j\}.
\end{eqnarray*}
Since $T_{A_i}(G)\ge T_{A_j}(G)$, we have
\begin{eqnarray}\label{eq-To}
T^o_{A_i}(G)+T_{A_iA_j}(G)\ge T^o_{A_j}(G)+T_{A_iA_j}(G)\quad \Leftrightarrow\quad T^o_{A_i}(G)\ge T^o_{A_j}(G).
\end{eqnarray}

\noindent For (i), it suffices to show 
\begin{eqnarray}\label{eq-o}
T^o_{A_i}(G)=T^o_{A_j}(G).
\end{eqnarray}
Note that
\begin{eqnarray}
k_3(G)&=&k_3(G')+|A_i|\cdot T^o_{A_i}(G)+|A_j|\cdot T^o_{A_j}(G)+|A_i|\cdot |A_j|\cdot T_{A_iA_j}(G),\label{eq-G}\\
k_3(G_{A_i})&=& k_3(G')+(|A_i|+|A_j|)\cdot T^o_{A_i}(G)+|A_i|\cdot |A_j|\cdot T_{A_iA_j}(G_{A_i}).\label{eq-GAi}
\end{eqnarray}
Then, since $G$ is extremal,
\begin{eqnarray*}
0\ge k_3(G_{A_i})-k_3(G)=|A_j|\cdot (T^o_{A_i}(G)-T^o_{A_j}(G))+|A_i|\cdot |A_j|\cdot (T_{A_iA_j}(G_{A_i})-T_{A_iA_j}(G)).
\end{eqnarray*}
Therefore, by~\eqref{eq-To}, we only need to show $T_{A_iA_j}(G_{A_i})\ge T_{A_iA_j}(G)$. Let 
\begin{eqnarray*}
V_{1,1/2}&=&\left\{A_\ell: w(A_iA_\ell)=1\mbox{ and }w(A_jA_\ell)=\frac{1}{2}\right\},\\
V_{1/2,1}&=&\left\{A_\ell: w(A_iA_\ell)=\frac{1}{2}\mbox{ and }w(A_jA_\ell)=1\right\},\\
V_{1/2,1/2}&=&\left\{A_\ell: w(A_iA_\ell)=\frac{1}{2}\mbox{ and }w(A_jA_\ell)=\frac{1}{2}\right\},\\
V_{1,1}&=&\left\{A_\ell: w(A_iA_\ell)=1\mbox{ and }w(A_jA_\ell)=1\right\}.
\end{eqnarray*}
Denote by $|V_{p,q}|=\sum_{A_{\ell}\in V_{p,q}}|A_{\ell}|$ for $p,q\in\{1/2,1\}$. We have
\begin{eqnarray*}
T_{A_iA_j}(G_{A_i})-T_{A_iA_j}(G)=\left(\frac{1}{2}-\frac{1}{4}\right)|V_{1,1/2}|-\left(\frac{1}{4}-\frac{1}{8}\right)|V_{1/2,1}|=\frac{1}{4}|V_{1,1/2}|-\frac{1}{8}|V_{1/2,1}|.
\end{eqnarray*}
Therefore, it suffices to show $2|V_{1,1/2}|\ge |V_{1/2,1}|$. For the sake of contradiction, assume 
\begin{eqnarray}\label{eq-cont}
|V_{1/2,1}|> 2|V_{1,1/2}|.
\end{eqnarray}
We will show that~\eqref{eq-cont} contradicts the extremality of $G$. Note that
\begin{eqnarray}\label{eq-GAj}
k_3(G_{A_j})&=& k_3(G')+(|A_i|+|A_j|)\cdot T^o_{A_j}(G)+|A_i|\cdot |A_j|\cdot T_{A_iA_j}(G_{A_j}).
\end{eqnarray}
By~\eqref{eq-G},~\eqref{eq-GAi},~\eqref{eq-GAj}, and the extremality of $G$ we have
\begin{eqnarray}
k_3(G_{A_i})\le k_3(G)\Leftrightarrow\left(\frac{1}{4}|V_{1,1/2}|-\frac{1}{8}|V_{1/2,1}|\right)|A_i|\cdot|A_j|+|A_j|\cdot T^o_{A_i}(G)\le |A_j|\cdot T^o_{A_j}(G),\label{eq-k3i}\\
k_3(G_{A_j})\le k_3(G)\Leftrightarrow\left(\frac{1}{4}|V_{1/2,1}|-\frac{1}{8}|V_{1,1/2}|\right)|A_i|\cdot|A_j|+|A_i|\cdot T^o_{A_j}(G)\le |A_i|\cdot T^o_{A_i}(G).\label{eq-k3j}
\end{eqnarray}
Then~\eqref{eq-k3i} and~\eqref{eq-k3j} imply
\begin{eqnarray*}
\left(\frac{1}{4}|V_{1,1/2}|-\frac{1}{8}|V_{1/2,1}|\right)|A_i|+T^o_{A_i}(G)\le T^o_{A_j}(G),\\
\left(\frac{1}{4}|V_{1/2,1}|-\frac{1}{8}|V_{1,1/2}|\right)|A_j|+ T^o_{A_j}(G)\le  T^o_{A_i}(G).\end{eqnarray*}
Therefore,
\begin{eqnarray*}
&&\left(\frac{1}{4}|V_{1/2,1}|-\frac{1}{8}|V_{1,1/2}|\right)|A_j|\le T^o_{A_i}(G)-T^o_{A_j}(G)\le 
\left(\frac{1}{8}|V_{1/2,1}|-\frac{1}{4}|V_{1,1/2}|\right)|A_i|\\
&\Rightarrow&\frac{1}{8}|V_{1/2,1}|(2|A_j|-|A_i|)\le \frac{1}{8}|V_{1,1/2}|(|A_j|-2|A_i|){\overset{\eqref{eq-cont}}{<}}\frac{1}{16}|V_{1/2,1}|(|A_j|-2|A_i|)\\
&\Rightarrow&4|A_j|-2|A_i|<|A_j|-2|A_i|\quad \Rightarrow\quad 4|A_j|<|A_j|,
\end{eqnarray*}
a contradiction.

For (ii), by the extremality of $G$, it suffices to show that $k_3(G_{A_i})+k_3(G_{A_j})\ge 2k_3(G)$. By~\eqref{eq-o},~\eqref{eq-G},~\eqref{eq-GAi} and~\eqref{eq-GAj}, we have
$$k_3(G_{A_i})+k_3(G_{A_j})-2k_3(G)=|A_i||A_j|\cdot ( T_{A_iA_j}(G_{A_i})+T_{A_iA_j}(G_{A_j})-2T_{A_iA_j}(G)).$$
It is left to show that $T_{A_iA_j}(G_{A_i})+T_{A_iA_j}(G_{A_j})-2T_{A_iA_j}(G)\ge 0$. Indeed,
\begin{eqnarray*}
&&T_{A_iA_j}(G_{A_i})+T_{A_iA_j}(G_{A_j})-2T_{A_iA_j}(G)\\
&&=\frac{1}{2}\sum_{\substack{1\le k\le m,\\ k\neq i,j}}w(A_iA_k)^2|A_k|+
\frac{1}{2}\sum_{\substack{1\le k\le m,\\ k\neq i,j}}w(A_jA_k)^2|A_k|-
2\cdot\frac{1}{2}\sum_{\substack{1\le k\le m,\\ k\neq i,j}}w(A_iA_k)w(A_jA_k)|A_k|\\
&&=\frac{1}{2}\sum_{\substack{1\le k\le m,\\ k\neq i,j}}\left(w(A_iA_k)-w(A_jA_k)\right)^2|A_k|\ge 0.
\end{eqnarray*}
\end{proof}

Denote by $S_2$ the following process: let $\sigma$ be the lexicographical ordering of ${[m]\choose 2}$ and apply $S_2(i,j)$, according to $\sigma$, for all pairs $(i,j)$ with $w(A_iA_j)=1/2$ . By Claim~\ref{cl-2-symm}, $S_2$ is finite and keeps the number of triangles.
\begin{claim}
The operation $S_2$ does not change the weighted clique number of $G$.
\end{claim}
\begin{proof}
Let $(X,Y)$ be one of the largest weighted complete subgraphs of $G$ of size $\ell$. Note that $|Y|$ is still $m$.
Also, since we only repeat this operation for vertices $x$ and $y$ with $w(xy)=1/2$, the operation is 
not changing $|X|$ either. Hence, after repeated applications of this operation, the weighted clique number of $G$ will not change. 
\end{proof}

After applying $S_2$, we have an equivalence relation on classes $A_1,\ldots, A_m$, which naturally extends to $V(G)$. To be precise,
denote by $\cB=\{B_1, \ldots, B_{m'}\}$ the equivalence classes of this relation, i.e.~two vertices $u$ and $v$ are in the same class if and only if $uv\notin G_{1}$. Then, the $\cA$-partition is a 
refinement of the $\cB$-partition. More importantly, the size of the largest weighted complete subgraph is $m+m'$. 

We will next show that we can perform some transformations (Claims~\ref{claim-twoA} and~\ref{claim-oneB}) to get a more structured graph (as those in Construction~\ref{const-oddeven}) without increasing the weighted clique number and decreasing the number of triangles.

\begin{claim}\label{claim-twoA}
Each $B_i$ contains at most two $A_j$'s.
\end{claim}
\begin{proof}
Let us assume that $B_1$ contains $k$ $A_j$'s, $A_1,\ldots, A_k$, where $k\ge 3$. Denote by  $U$ the vertex 
set of $B_1$ and write $u=|U|$. 
Note that the edges between two $B_i$'s always have weight $1$ and the edges inside an $A_i$ have 
weight $0$ and all the other edges have weight $1/2$. We will divide the proof into three cases depending 
on the value of $k$. In each case, we will modify $B_1$ by splitting it into multiple parts. This modification will only change the 
weight of the edges with both ends in $U$ and also the equivalence classes $\cA$ and $\cB$. Then we 
need to prove that the weighted clique number did not increase, and the number of triangles increases. For the latter, since the weight of the edges with at least one end in $V\setminus U$ remain the 
same, we only need to show that the number of triangles with two or three vertices in $U$ did not 
decrease. Therefore, it suffices to show that both $e(U)$ and $T(U)$ increase.

\textbf{Case 1:} Assume $k\ge 5$, which implies $u\ge 5$. We will split vertices in $U$ into three parts, $B_{11}$, $B_{12}$ and 
$B_{13}$, such that $|B_{11}|\le|B_{12}|\le|B_{13}|\le |B_{11}|+1$. Also, define $A_i=B_{1i}$ for all 
$1\le i\le 3$. For every $u\in U$ and $v\in V\setminus U$, we will not change $w(uv)$. For all 
vertices $u,u'\in U$ if they belong to the same $B_{1i}$, let $w(uu')=0$, otherwise let $w(uu')=1$. The 
equivalence classes $\cA$ and $\cB$ will change to $\{A_1,A_2,A_3,A_{k+1},\ldots,A_{m}\}$ and 
$\{B_{11}, B_{12}, B_{13}, B_2,\ldots,B_{m'}\}$. Since $k\ge 5$, the number of classes in the $\cA$ partition 
decreased by at least two and the number of classes in the $\cB$ partition increased by exactly $2$, hence, 
the weighted clique number of $G$ will not increase. Now, we only need to show that the number of 
triangles in the graph $G$ increases. 
\begin{eqnarray*}
&\mbox{before:}&e(U)\le{k\choose 2}\frac{u^2}{k^2}\cdot\frac{1}{2}< \frac{u^2}{4},\\
&\mbox{after:}&e(U)=
\left\{
	\begin{array}{ll}
		3\cdot\frac{u^2}{9}=\frac{u^2}{3}& \mbox{if }u\equiv 0\text{ (mod } 3), \\
		\frac{(u-1)^2}{9}+2\cdot\frac{(u-1)(u+2)}{9}=\frac{u^2-1}{3}& \mbox{if }u\equiv 1\text{ (mod } 3), \\
		\frac{(u+1)^2}{9}+2\cdot\frac{(u-2)(u+1)}{9}=\frac{u^2-1}{3}& \mbox{if }u\equiv 2\text{ (mod } 3).
	\end{array}
\right.
\end{eqnarray*}
Therefore $e(U)$ increases for $u\ge 2$. Now, for $T(U)$ we have
\begin{eqnarray*}
&\mbox{before:}&T(U)\le{k\choose 3}\frac{u^3}{k^3}\cdot\frac{1}{8}\le \frac{u^3}{48},\\
&\mbox{after:}&T(U)=
\left\{
	\begin{array}{ll}
		\frac{u^3}{27} & \mbox{if }u\equiv 0\text{ (mod } 3), \\
		\frac{(u-1)(u-1)(u+2)}{27}& \mbox{if }u\equiv 1\text{ (mod } 3), \\
		\frac{(u-2)(u+1)(u+1)}{27}& \mbox{if }u\equiv 2\text{ (mod } 3),
	\end{array}
\right.
\end{eqnarray*}
which means that $T(U)$ increases if $u\ge 3$.

\textbf{Case 2:} Assume $k=4$ which implies $u\ge 4$. Let us split vertices in $U$ into three parts $A_{1}$, $A_{2}$ and $A_{3}$, such that $|A_{1}|\le|A_{2}|\le|A_3|\le |A_1|+1$. Also let $B_{11}=A_1\cup A_2$ and $B_{12}=A_3$. For all vertices $u,u'\in U$ if they are in different $B_{1i}$'s then $w(uu')=1$. If they are both in $B_{11}$ but in different $A_i$'s then $w(uu')=1/2$, and $w(uu')=0$ if they are in the same $A_i$. The equivalence classes $\cA$ and $\cB$ will change to $\{A_1,A_2,A_3,A_{5},\ldots,A_{m}\}$ and $\{B_{11}, B_{12}, B_2,\ldots,B_{m'}\}$. Notice that the number of classes in the $\cA$ partition 
decreased by one and the number of classes in $\cB$ increased by one, hence, 
the weighted clique number of $G$ will not change. For $e(U)$:
\begin{eqnarray*}
&\mbox{before:}&e(U)\le{4 \choose 2}\frac{u^2}{16}\cdot \frac{1}{2}=\frac{3u^2}{16},\\
&\mbox{after:}&e(U)=
\left\{
	\begin{array}{ll}
		\frac{1}{2}\cdot\frac{u^2}{9}+2\cdot\frac{u^2}{9} & \mbox{if }u\equiv 0\text{ (mod } 3), \\
		\frac{1}{2}\cdot\frac{(u-1)(u-1)}{9}+2\cdot\frac{(u-1)(u+2)}{9}& \mbox{if }u\equiv 1\text{ (mod } 3), \\
		\frac{1}{2}\cdot\frac{(u-2)(u+1)}{9}+\frac{(u-2)(u+1)}{9}+\frac{(u+1)(u+1)}{9}& \mbox{if }u\equiv 2\text{ (mod } 3).
	\end{array}
\right.
\end{eqnarray*}
For $u\ge 2$, $e(U)$ increases. We also need to show that $T(U)$ increases:
\begin{eqnarray*}
&\mbox{before:}& T(U)\le 4\cdot\frac{u^3}{4^3}\cdot\frac{1}{8}=\frac{u^3}{4^3}\cdot\frac{1}{2},\\
&\mbox{after:}&T(U)=
\left\{
	\begin{array}{ll}
		\frac{1}{2}\cdot\frac{u^3}{27} & \mbox{if }u\equiv 0\text{ (mod } 3), \\
		\frac{1}{2}\cdot\frac{(u-1)(u-1)(u+2)}{27}& \mbox{if }u\equiv 1\text{ (mod } 3), \\
		\frac{1}{2}\cdot\frac{(u-2)(u+1)(u+1)}{27}& \mbox{if }u\equiv 2\text{ (mod } 3).
	\end{array}
\right.
\end{eqnarray*}
Therefore $T(U)$ will increase for $u\ge 3$.

\textbf{Case 3:} Assume $k=3$, which implies $u\ge 3$. First, suppose that $u\le 12n/13$. Split vertices in $U$ into two equal parts $B_{11}$ and $B_{12}$. Also define $A_1=B_{11}$ and $A_2=B_{12}$. For all vertices $u,u'\in U$ if they are in different $B_{1i}$'s then set $w(uu')=1$, and \mbox{$w(uu')=0$} otherwise. The equivalence classes $\cA$ and $\cB$ will change to $\{A_1,A_2,A_{4},\ldots,A_{m}\}$ and $\{B_{11}, B_{12}, B_2,\ldots,B_{m'}\}$. Notice that the number of classes in the $\cA$ partition 
decreased by one and the number of classes in the $\cB$ partition increased by one, hence, 
the weighted clique number of $G$ will not change. For the change on the number of triangles, we have
\begin{eqnarray*}
&\mbox{before:}& T(U)+e(U)(n-u)\le \frac{u^3}{3^3}\cdot\frac{1}{2^3}+\frac{1}{2}\cdot{3\choose 2}\frac{u^2}{9}(n-u),\\
&\mbox{after:}&T(U)+e(U)(n-u)=
\left\{
	\begin{array}{ll}
		0+\frac{u^2}{4}\cdot(n-u) & \mbox{if }u\text{ is even},\\
		0+\frac{(u-1)(u+1)}{4}\cdot(n-u)\ge\frac{u^2}{4.5}(n-u)& \mbox{if }u\text{ is odd}.
	\end{array}
\right.
\end{eqnarray*}
Since $u\le 12n/13$, we have
\begin{gather*}
\frac{u^3}{3^3}\cdot\frac{1}{2^3}+\frac{3}{2}\cdot(n-u)\cdot\frac{u^2}{3^2}\le (n-u)\cdot\frac{u^2}{4.5}\quad\Leftrightarrow\quad\frac{u^3}{6^3}\le (n-u)\cdot\frac{u^2}{18}\quad\Leftrightarrow\\ \frac{u}{12}\le (n-u)\quad\Leftrightarrow\quad u\le \frac{12}{13}n.
\end{gather*}
 
We may now assume that $u>12n/13$. Let $U'$ be the vertex set of 
$B_2$ and $u'=|B_2|$. Since $u\ge 12n/13$ and $u'\le n/13$, we may assume $B_2$ contains at most two $A_i$'s. Note that $u'\le u/12$. We split $U\cup U'$ into three classes of the same size, $B_{0}$, $B_{1}$ and $B_{2}$. Define $A_0=B_0$, $A_1=B_1$, and $A_2=B_2$. For two vertices $u,u'\in U\cup U'$, if they belong to the same $B_i$ then $w(uu')=0$, otherwise $w(uu')=1$. The equivalence classes $\cA$ and $\cB$ will change to $\{A_0,A_1,A_2, A_{5},\ldots,A_{m}\}$ and $\{B_{0}, B_{1}, B_2, B_3, \ldots,B_{m'}\}$. Notice that the number of classes in $\cA$ decreased by one and the number of classes in $\cB$ increased by one, which implies that the weighted clique number of $G$ will not change. We are left to 
show that this operation will increase $e(U\cup U')$ and $T(U\cup U')$:
\begin{eqnarray*}
&\mbox{before:}& e(U\cup U')\le\frac{u^2}{3^2}\cdot\frac{3}{2}+uu'+\frac{u'^2}{8}\le\frac{u^2}{6}+\frac{u^2}{12}+\frac{u'^2}{8}=\frac{3u^2}{12}+\frac{u'^2}{8},\\
&\mbox{after:}&e(U\cup U')=
\left\{
	\begin{array}{ll}
		3\cdot\frac{(u+u')^2}{9} & \mbox{if }u+u'\equiv 0\text{ (mod } 3), \\
		\frac{(u+u'-1)(u+u'-1)}{9}+2\cdot\frac{(u+u'-1)(u+u'+2)}{9}& \mbox{if }u+u'\equiv 1\text{ (mod } 3), \\
		2\cdot\frac{(u+u'-2)(u+u'+1)}{9}+\frac{(u+u'+1)^2}{9}& \mbox{if }u+u'\equiv 2\text{ (mod } 3).
	\end{array}
\right.
\end{eqnarray*}

\noindent Hence $e(U\cup U')$ is increasing for $u\ge 3$. We also have
\begin{eqnarray*}
&\mbox{before:}& T(U\cup U')\le\left(\frac{u}{3}\right)^3\cdot\frac{1}{8}+\frac{3}{2}\cdot\frac{u^2}{3^2}\cdot u'+u\cdot\frac{u'^2}{8}\le \frac{u^3}{6^3}+\frac{u^3}{6\cdot 12}+\frac{u^3}{8\cdot 12^2}\le\frac{u^3}{51.5},\\
&\mbox{after:}&T(U\cup U')= 
\left\{
	\begin{array}{ll}
		\frac{(u+u')^3}{27} & \mbox{if }u+u'\equiv 0\text{ (mod } 3), \\
		\frac{(u+u'-1)(u+u'-1)(u+u'+2)}{27}& \mbox{if }u+u'\equiv 1\text{ (mod } 3), \\
		\frac{(u+u'-2)(u+u'+1)(u+u'+1)}{27}& \mbox{if }u+u'\equiv 2\text{ (mod } 3).
	\end{array}
\right.
\end{eqnarray*}

Therefore $T(U\cup U')$ increases for $u+u'\ge 3$.
\end{proof}
\begin{claim}\label{claim-oneB}
There is at most one $B_i$ that contains two $A_j$'s.
\end{claim}
\begin{proof}
Now, we know that no $B_i$ contains three or more $A_i$'s. Let us assume that $B_1=A_1\cup A_2$ and $B_{2}=A_3\cup A_4$. Denote by $U$ the vertex set of $B_1\cup B_{2}$, and write $u=|U|$. Since each $A_i$ contains at least one vertex, we have that $u\ge 4$. We will split the vertices in $U$ into three equal pieces, $B_{11}$, $B_{12}$ and $B_{13}$, and redefine $A_1=B_{11}$, $A_2=B_{12}$, and $A_3=B_{13}$. For two vertices $u,u'\in U$ if they are in two different $B_{1i}$'s then $w(uu')=1$, otherwise $w(uu')=0$. This operation will change $\cA$ and $\cB$ to $\{A_1,A_2,A_3, A_{5},\ldots,A_{m}\}$ and $\{B_{11}, B_{12}, B_{13}, B_3, \ldots,B_{m'}\}$,  therefore the weighted clique number does not change. We only need to show that $e(U)$ and $T(U)$ increase.
\begin{eqnarray*}
&\mbox{before:}&e(U)\le\frac{u^2}{4^2}+\frac{u^2}{4}=\frac{5u^2}{16},\\
&\mbox{after:}&e(U)=
\left\{
	\begin{array}{ll}
		3\cdot\frac{u^2}{9} & \mbox{if }u\equiv 0\text{ (mod } 3), \\
		\frac{(u-1)^2}{9}+2\cdot\frac{(u-1)(u+2)}{9}& \mbox{if }u\equiv 1\text{ (mod } 3), \\
		\frac{(u+1)^2}{9}+2\cdot\frac{(u-2)(u+1)}{9}& \mbox{if }u\equiv 2\text{ (mod } 3),
	\end{array}
\right.\\
&\mbox{before:}& T(U)\le\frac{u^2}{16}\cdot\frac{u}{2}=\frac{u^3}{32},\\
&\mbox{after:}&T(U)=
\left\{
	\begin{array}{ll}
		\frac{u^3}{27} & \mbox{if }u\equiv 0\text{ (mod } 3), \\
		\frac{(u-1)^2(u+2)}{27}& \mbox{if }u\equiv 1\text{ (mod } 3), \\
		\frac{(u+1)^2(u-2)}{27}& \mbox{if }u\equiv 2\text{ (mod } 3).
	\end{array}
\right.
\end{eqnarray*} 

It can be easily checked that for $u\ge 4$, both $e(U)$ and $T(U)$ are not decreasing.
\end{proof}

Now, we will use the Claims~\ref{claim-twoA} and~\ref{claim-oneB} to complete the proof of  Lemma~\ref{lem-triangle}. Let us assume that the extremal graph has partitions $\cA=\{A_1,\ldots,A_m\}$ and $\cB=\{B_1,\ldots,B_{m'}\}$. Also, since $\cA$ is a refinement of $\cB$ and also by Claims~\ref{claim-twoA} and~\ref{claim-oneB}, we have $m'\le m\le m'+1$. When $t=2\ell+1$, the graph does not contain a weighted clique of size $2\ell+1$, which implies $m+m'\le 2\ell$. Therefore $m'=m=\ell$ will maximize the number of triangles. In particular, the extremal graph is an $\ell$-partite graph with partite sets $B_1\cup\ldots\cup B_{\ell}$, where $||B_i|-|B_j||\le 1$ for all $1\le i<j\le \ell$. Define $A_i=B_i$ for all $1\le i\le \ell$, and for two vertices $u$ and $v$ if they belong to two different $B_i$'s then $w(uv)=1$, otherwise $w(uv)=0$.

When $t=2\ell$, the graph does not contain a weighted clique of size $2\ell$ which implies $m+m'\le 2\ell-1$. Therefore, in the extremal example, $m'=\ell-1$ and $m=\ell$. Hence, the extremal example is an $(\ell-1)$-partite graph, with partite sets $B_1\cup\ldots\cup B_{\ell-1}$, and let $B_1=A_1\cup A_2$. Simple optimization shows that $|A_1|=|A_2|$, and, for all $2\le i\le \ell-1$, all the $B_i$'s have the same size, i.e.~$|B_1|=x$ and $|B_i|=(n-x)/(\ell-2)$ for all $2\le i\le\ell-1$. Fix two vertices $u$ and $v$, if they belong to two different $B_i$'s then set $w(uv)=1$. Otherwise, if they both 
belong to $B_1$ but to different $A_i$'s then set $w(uv)=1/2$, and $w(uv)=0$ in all other cases. Now, we only need to maximize the number of triangles with respect to $x$, which is exactly the optimization in~\eqref{eq-al}, showing that $T(G)\le a_t n^3$. This completes the proof of 
Lemma~\ref{lem-triangle}. 
 \end{proof}

\begin{proof}[Proof of Theorem~\ref{thm-k3}] 
For any given integer $t\ge 6$, let $\ell=\floor{\frac{t}{2}}$. The lower bound comes from $\cH(n,k)$ with $k=t$ in Construction~\ref{const-oddeven}. In this case, we solve an optimization problem to find the size of $V_i$'s that maximizes the number of triangles, which is how $a_\ell$ is defined in~\eqref{eq-al}.
    
For the upper bound, we will show that for any $\ep>0$, there exists $\de>0$ such that the following holds for sufficiently large $n$. Let $G$ be an $n$-vertex $K_t$-free graph with $\alpha(G)\le \de n$. Then $G$ has at most $(1+\ep)a_{\ell}n^3$ triangles. 

Choose constants $0\ll \de \ll\ep_1\ll\ep<1$. Let $R=R(\cC,w)$ be the weighted cluster graph obtained from applying Theorem~\ref{thm-color} to $G$ with $\ep_1$ playing the role of $\ep$. By Lemma~\ref{lem-completefree2}, we have that $R(\cC,w)$ does not contain a weighted clique of size $t$. Then the upper bound follows from Lemma~\ref{lem-triangle} and that 
$$k_3(G)\le T(R)\cdot\frac{n^3}{|R|^3}+\ep n^3\le (1+\ep)a_{\ell}n^3,$$
as desired, where the last term bounds the number of triangles in $G$ that do not correspond to a triangle in $R$.
\end{proof}



\section{Concluding remarks}
In this paper, we study the Ramsey-Tur\'an extensions of two special cases of classical problems. We determine $\RF(2,k,o(n))$, that is, the maximum number of $2$-edge-colorings an $n$-vertex graph with independence number $o(n)$ can have without a monochromatic $K_k$, and $\RT(K_3,K_t,o(n))$, the maximum number of triangles in an $n$-vertex $K_t$-free graph with $o(n)$ independence number.
\subsection{3-edge-colorings}
The Ramsey-Tur\'an extension of the Erd\Ho s-Rothschild problem for more than 2 colors remains widely open. It is known~\cite{ABKS} that $F(n,3,k)=3^{\ex(n,K_k)}$. It will be interesting to study for $3$-edge-colorings, $\RF(3,k,o(n))$. The following determines the case when forbidding monochromatic triangles. We give here only a sketch of a proof.
\begin{theorem}
$\RF(3,3,o(n))=2^{n^2/4+o(n^2)}$.
\end{theorem}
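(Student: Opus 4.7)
The plan for the lower bound is to adapt the $r=2$ construction. Let $\Gamma$ be an $(n/2)$-vertex triangle-free graph with $\al(\Gamma) = o(n)$ (which exists by~\cite{Erdos-girth}), and let $G$ be the $n$-vertex graph on $V = A \sqcup B$ with $|A| = |B| = n/2$, $G[A] \cong G[B] \cong \Gamma$, and $G[A, B]$ the complete bipartite graph. Then $\al(G) \le \al(\Gamma_A) + \al(\Gamma_B) = o(n)$. I will consider the family of $3$-edge-colorings obtained by assigning a fixed color $c_1$ to every edge of $\Gamma_A \cup \Gamma_B$ and, independently, either $c_2$ or $c_3$ to each of the $n^2/4$ cross-edges. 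Since $G[A,B]$ is bipartite, every triangle of $G$ uses at least one edge of $\Gamma_A \cup \Gamma_B$, so no triangle can be monochromatic in $c_2$ or $c_3$; and a monochromatic $c_1$-triangle would lie inside $\Gamma_A$ or $\Gamma_B$, contradicting their triangle-freeness. This yields $2^{n^2/4}$ valid colorings, and hence $\RF(3,3,o(n)) \ge 2^{n^2/4+o(n^2)}$.

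For the upper bound, I will follow the strategy of Theorem~\ref{thm-2coloringreduction}. Given a $3$-edge-coloring of an $n$-vertex graph $G$ with $\al(G)\le\ga n$ avoiding monochromatic $K_3$, first apply Lemma~\ref{lem-partition} with $r=3$ and $c=\ga$ sufficiently small to obtain a partition $V = C_1 \cup C_2 \cup C_3$ satisfying $\al(G_{c_i}[C_i]) \le \ga n$ for each $i$. Each $G_{c_i}$ is triangle-free, so applying Theorem~\ref{thm-RT} with $k=3$ to $G_{c_i}[C_i]$ yields $e(G_{c_i}[C_i]) = o(n^2)$. I will then count colorings compatible with a fixed partition: the partition itself has at most $3^n = 2^{o(n^2)}$ choices; for each $i$, the choice of which $o(n^2)$ edges inside $C_i$ are colored $c_i$ contributes another $2^{o(n^2)}$ factor; and the remaining edges of $G[C_i]$ are $2$-colored with $\{c_j, c_k\}$ into two triangle-free classes, contributing at most $F(|C_i|,2,3)\le 2^{|C_i|^2/4}$ by Yuster's theorem. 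For the cross edges between $C_i$ and $C_j$, I will apply Theorem~\ref{thm-color} and form the three weighted cluster graphs $R_{c_1}, R_{c_2}, R_{c_3}$; combining Lemma~\ref{lem-completefree2} with the constraint $\al(G_{c_l}[C_l])\le\ga n$ and an analysis in the spirit of the proof of Theorem~\ref{thm-2coloringreduction}, I expect to bound the cross contribution per pair by $2^{|C_i||C_j|/2+o(n^2)}$. The identity
\[
\sum_{i=1}^{3} \frac{|C_i|^2}{4} + \sum_{1 \le i < j \le 3} \frac{|C_i||C_j|}{2} = \frac{1}{4}\left(\sum_{i=1}^{3}|C_i|\right)^2 = \frac{n^2}{4}
\]
then combines these contributions into the total upper bound $2^{n^2/4+o(n^2)}$, matching the lower bound.

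The hard part will be the cross-edge bound of $2^{|C_i||C_j|/2+o(n^2)}$. Unlike in Theorem~\ref{thm-2coloringreduction}, the three triangle-free color classes interact across parts through tripartite triangles (with one vertex in each of $C_i,C_j,C_k$), and the bipartite cross subgraph $G[C_i,C_j]$ in isolation carries no triangle constraint, so the restriction on cross colorings must be extracted from the interplay with both in-part edges and edges of the third part. The sharp $1/2$-factor per edge seems to require a careful weighted cluster graph argument that simultaneously tracks all three colors and exploits both the triangle-freeness of each $G_{c_l}$ and the $\al$-constraint imposed by Lemma~\ref{lem-partition}.
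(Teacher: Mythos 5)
Your lower bound is correct and is exactly the paper's construction ($\cH(n,5)$ with two equal parts, inside edges in one color, cross edges free over the other two), and your treatment of the edges inside each $C_i$ (enumerate the sparse triangle-free $c_i$-class, then invoke Yuster's $F(|C_i|,2,3)=2^{\lfloor|C_i|^2/4\rfloor}$ for the remaining two colors) is a legitimate, even clean, alternative to what the paper does inside the parts (the paper instead shows the multiplicity-2 cluster edges inside $A_i$ form a triangle-free cluster graph and applies Mantel). The problem is that your upper bound has a hole exactly where you flag it: the cross-pair bound $2^{|C_i||C_j|/2+o(n^2)}$ is only announced (``I expect to bound\dots''), not proved, and your diagnosis of why it is hard -- that the constraint must be extracted from tripartite triangles and a delicate three-color weighted cluster analysis -- misses the one observation that makes the whole proof go through.

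That observation is the same trick as item (i) in the proof of Theorem~\ref{thm-2coloringreduction}: after refining the regularity partition of Theorem~\ref{thm-color} by $\{C_1,C_2,C_3\}$, no cluster pair that is regular and dense in color $c_i$ can have an endpoint cluster inside $C_i$, because such a pair is a weighted $(1,2)$-clique in $R_{c_i}$ with $\al(G_{c_i}[X])\le\al(G_{c_i}[C_i])\le\ga n$, and Lemma~\ref{lem-completefree2} then produces a $c_i$-monochromatic triangle (two vertices in the cluster inside $C_i$, one in the other cluster). Consequently a cross pair between $C_i$ and $C_j$ can be dense only in the third color $c_k$: apart from edges in irregular or sparse pairs, of which there are only $O(\ep n^2)$, the coloring of $G[C_i,C_j]$ is forced, so the cross contribution is $2^{O(\ep\log(1/\ep))n^2}$, far below the $2^{|C_i||C_j|/2}$ you were aiming for. (This is precisely the paper's statement that all cluster edges of $R^*[A_i,A_j]$ have multiplicity 1 in color $\phi_k$.) With that in hand your accounting should be redone: the cross terms are negligible and the bound becomes $2^{\sum_i|C_i|^2/4+o(n^2)}\le 2^{n^2/4+o(n^2)}$; your identity summing to exactly $n^2/4$ is not needed, and it is fortunate rather than essential that the weaker cross bound you hoped for would also have sufficed. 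As written, however, the decisive step of the upper bound is missing.
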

\begin{proof}[Sketch of a proof.] (Lower bound) Let $G\in \cH(n,5)$ with $|V_1|=|V_2|=n/2$. Consider the following 3-edge-colorings. Color edges in $G[V_i]$, $i=1,2$, red and color the cross-edges $G[V_1,V_2]$ either green or blue.
	
(Upper bound) Let $G$ be an extremal graph, $\phi: E(G)\rightarrow \{\phi_1,\phi_2,\phi_3\}$ be a $3$-edge-coloring with no monochromatic $K_3$, and $(A_1,A_2,A_3)$ be the partition obtained from Lemma~\ref{lem-partition} such that $\al(G_{\phi_i}[A_i])=o(n)$. Let $R^*$ be the multigraph by taking the union $\cup_{i=1}^3R_{\phi_i}$, where $R_{\phi_i}$ is the cluster graph in color $\phi_i$. 
Denote by $\mu_i$, $i=1,2,3$, the edge-density of the subgraph of $R^*$ induced by edges with multiplicity $i$. Note first that $\mu_3=0$, since otherwise a multiplicity-3 edge results in a weighted clique of size 3 in $\cap_i R_{\phi_i}$, contradicting to $\phi$ containing no monochromatic $K_3$. It suffices then to show $\mu_2\le 1/2$. Notice that no $\phi_i$-colored edge can have an endpoint in $A_i$, otherwise we have a $\phi_i$-colored triangle. This implies that 

(i) for every $i\neq j\in [3]$, all edges in $R^*[A_i,A_j]$ have multiplicity 1 with color $\phi_k$, $k\neq i,j$;

(ii) for $i\in [3]$, all edges in $R^*[A_i]$ have multiplicity at most 2, colored in $\{j,k\}=[3]\setminus\{i\}$.

By (i), we only need to consider edges in $\cup_i R^*[A_i]$. By (ii), inside $A_i$, there is no color $\phi_i$. This together with the observation that edges colored in $\{\phi_p,\phi_q\}$ for any $p\neq q\in [3]$ is triangle-free, we have $\mu_2\le 1/2$ as desired.
\end{proof}

\subsection{Generalized Ramsey-Tur\'an for larger cliques}
It seems plausible that for the general case $\RT(K_s,K_t,o(n))$, $t> s\ge 3$, some graph from the following construction has the maximum number of $K_s$.
\begin{construction}\label{const-Hst}
Given $3\le s<t\le 2s-1$, denote by $\cH(n,s,t)$ the family of $n$-vertex graphs $G$ on vertex set $V_1\cup\ldots\cup V_s$ obtained as follows. Let $H$ be an extremal $K_{t-s}$-free graph on vertex set $[s]$. Make $[V_i,V_j]$ complete bipartite if $ij\in E(H)$; otherwise, put a copy of $\BE(V_i,V_j)$ if $ij\not\in E(H)$. For every $i\in V(H)$ with $d_H(i)=s-1$, put a $|V_i|$-vertex triangle-free graph with $o(|V_i|)$ independence number in $V_i$.
\end{construction}

Note that all graphs $G$ in the above construction are $K_t$-free and have $o(n)$ independence number. Indeed, since $G[V_i]$ is triangle-free for all $i$, in order to have a copy of $K_t$, there should be at least $t-s$ classes, $V_{j_1},\ldots,V_{j_{t-s}}$, each containing 2 vertices that form a $K_{2(t-s)}$. This would imply for every $1\le p<q\le t-s$, $G[V_{j_p},V_{j_q}]$ contains a $K_4$, which contradicts to $H$ being $K_{t-s}$-free. It should also be noted that the sizes of $V_i$'s need to be optimized.

\begin{conj}
Given integers $t> s\ge 3$, one of the extremal graphs for $\RT(K_s,K_t,o(n))$ lies in $\cH(n,s,t)$ from Construction~\ref{const-Hst} when $t\le 2s-1$, and lies in $\cH(n,k)$ with $k=t$ from Construction~\ref{const-oddeven} when $t\ge 2s$.
\end{conj}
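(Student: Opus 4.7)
The plan is to follow the framework used in the proofs of Theorems~\ref{thm-k3} and~\ref{thm-ks}. Apply the Szemer\'edi regularity lemma (Theorem~\ref{thm-color} with $r=1$) to $G$ to obtain a weighted cluster graph $R=R(\cC,w)$ with respect to some small $\ep_1$. Since $G$ is $K_t$-free with $\al(G)\le \ga n$, Lemma~\ref{lem-completefree2} implies $R$ contains no weighted complete subgraph of size $t$. As in the proof of Theorem~\ref{thm-ks}, the number of copies of $K_s$ in $G$ is at most the weighted $K_s$-count of $R$, scaled by $(n/|R|)^s$, plus an $o(n^s)$ error from edges inside clusters or across sparse/irregular pairs. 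The problem thus reduces to maximizing the weighted count of $K_s$ (where a weighted $K_s$ has weight $\prod w(e)$ over its $\binom{s}{2}$ edges) in a weighted graph on $N$ vertices with no weighted $K_t$.

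To attack this weighted extremal problem, I would adapt the two-round symmetrization of Lemma~\ref{lem-triangle} from triangle counts to $K_s$ counts. The first round $S_1$ symmetrizes weight-$0$ edges, producing equivalence classes $\cA=\{A_1,\ldots,A_m\}$ (vertices in the same class share no edge). The second round $S_2$ symmetrizes weight-$1/2$ edges via an exchange argument generalizing Claim~\ref{cl-2-symm}, producing classes $\cB=\{B_1,\ldots,B_{m'}\}$ where $\cA$ refines $\cB$, and the no-weighted-$K_t$ condition translates to $m+m'\le t-1$. The \textbf{main obstacle} is extending the convexity step at the end of Claim~\ref{cl-2-symm}: there, the change in triangle count decomposes as a sum of squares $\sum_k (w(A_iA_k)-w(A_jA_k))^2|A_k|\ge 0$. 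For $K_s$ with $s\ge 4$, the analogous expression involves products of $s-2$ weights along the remaining clique vertices, so one would need a multilinear convexity inequality showing that symmetrizing the two weight vectors $(w(A_iA_k))_k$ and $(w(A_jA_k))_k$ does not decrease the corresponding symmetric polynomial. Establishing this, and then proving structural analogues of Claims~\ref{claim-twoA} and~\ref{claim-oneB} bounding how many $A_j$'s each $B_i$ can contain (the bounds will now depend on $s$), is the bulk of the technical work.

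Once such a structure theorem for the extremal weighted graph is in place, the dichotomy in the conjecture emerges naturally by optimizing. When $t\ge 2s$, the budget $m+m'\le t-1$ is large enough to afford $m'\ge s$ distinct $\cB$-classes, so the optimal weighted $K_s$'s use only weight-$1$ edges across classes and the structure matches the blow-up of the complete $\lfloor t/2\rfloor$-partite graph with (at most one) double-class, i.e., $\cH(n,t)$ from Construction~\ref{const-oddeven}. When $t\le 2s-1$, we have $m'<s$, so every weighted $K_s$ must use at least one weight-$1/2$ cross-edge between $\cB$-classes; the extremal configuration then chooses, among the $s$ parts $V_1,\ldots,V_s$, which pairs to equip with weight-$1/2$ cross-edges (lifting to BE-graphs) and which with weight-$1$ (lifting to complete bipartite graphs), subject to avoiding a weighted $K_t$. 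The avoidance condition is exactly that the weight-$1$ graph on $[s]$ is $K_{t-s}$-free, recovering the extremal $K_{t-s}$-free graph $H$ in Construction~\ref{const-Hst}. A final optimization over the part sizes $|V_i|$ and lifting via Lemma~\ref{lem-completefree2} (as in Theorem~\ref{thm-k3}) would produce an extremal graph in the claimed family.
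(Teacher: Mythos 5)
You are attempting to prove a statement that the paper itself records as an open problem: it appears only as a Conjecture in the concluding remarks, with no proof, so there is no argument of the authors to compare yours against — and your proposal does not close the conjecture either. The decisive gap is precisely the step you yourself flag as the ``main obstacle'': extending the second symmetrization round (the analogue of Claim~\ref{cl-2-symm}) from triangle counts to $K_s$ counts. For triangles the exchange argument reduces to the sum-of-squares identity $\sum_k\left(w(A_iA_k)-w(A_jA_k)\right)^2|A_k|\ge 0$; for $s\ge 4$ the corresponding difference involves degree-$(s-2)$ products of weights over the remaining clique vertices, it is not a priori nonnegative, and you supply no substitute inequality. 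The structural analogues of Claims~\ref{claim-twoA} and~\ref{claim-oneB} (bounding how many $\cA$-classes a $\cB$-class may contain, now as a function of $s$ and $t$) are likewise only named, not proved. These missing pieces are exactly where the difficulty sits: note that the paper's own Theorem~\ref{thm-ks} handles only $t=s+2$, and does so by sidestepping the weighted optimization entirely (there no $K_s$ of the cluster graph contains a heavy edge, so every weighted $K_s$ has weight at most $2^{-\binom{s}{2}}$), while Theorem~\ref{thm-k3} solves the weighted problem only for $s=3$ via Lemma~\ref{lem-triangle}.

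Beyond the missing core, the concluding dichotomy is too coarse even conditionally. For $t\ge 2s$ with $t$ even, the conjectured extremal graph from Construction~\ref{const-oddeven} contains a Bollob\'as--Erd\H os pair, i.e.\ weight-$1/2$ cluster edges that genuinely contribute to the optimal count (compare the $x$-terms in~\eqref{eq-al} for $t=2\ell$), so your claim that the optimal weighted $K_s$'s ``use only weight-$1$ edges'' in that range is unjustified and in general false. For $t\le 2s-1$, asserting that the weight-$1$ graph on $[s]$ must be an \emph{extremal} $K_{t-s}$-free graph, and that the part sizes optimize to those of Construction~\ref{const-Hst}, is itself an optimization problem you have not carried out; merely avoiding a weighted $K_t$ only forces $K_{t-s}$-freeness, not extremality. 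The regularity reduction and the lower-bound constructions in your first and last paragraphs do mirror the paper's framework correctly, but as written the proposal is a program, not a proof, and the statement remains open.
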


\subsection{Phase transition}

For a given graph $H$ and two functions $f(n)\le g(n)$, we say that the Ramsey-Tur\'an function for $H$  exhibits a {\em jump} or has a {\em phase transition} from $g(n)$ to $f(n)$ if
	\begin{eqnarray*}
		\limsup\limits_{n\rightarrow\infty}\frac{\RT(n,H,f(n))}{n^2}
		<\liminf\limits_{n\rightarrow\infty}\frac{\RT(n,H,g(n))}{n^2}.
	\end{eqnarray*} 

Let $g_r(n)=n2^{-\om(n)\log^{1-1/r}n}$. Balogh, Hu and Simonovits~\cite{BHS} showed that the Ramsey-Tur\'an function for the even clique $K_{2r}$ exhibits a jump from $o(n)$ to $g_r(n)$. A similar phenomenon happens in the more general setup.
\begin{theorem}\label{thm-jump}
	\begin{enumerate}
		\item\label{itm-jump56} $\RT(K_3,K_5,g_3(n))=o(n^3)$ and $\RT(K_3,K_6,g_3(n))=o(n^3)$.
		\item\label{itm-jumpodd} Odd cliques larger than 5 are stable: for every $\ell\ge 3$,
		$$\RT(K_3,K_{2\ell+1},g_{\ell+1}(n))=(1+o(1))\RT(K_3,K_{2\ell+1},o(n)).$$
		\item\label{itm-jumpeven} Even cliques always exhibit a jump: for every $\ell\ge 3$,
		$$\RT(K_3,K_{2\ell+2},g_{\ell+1}(n))=(1+o(1))\RT(K_3,K_{2\ell+1},o(n)).$$
	\end{enumerate}
\end{theorem}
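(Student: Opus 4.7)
The proof splits into three parts; part~(2) is a construction problem, while parts~(1) and~(3) rely on the Balogh-Hu-Simonovits (BHS) machinery. For the upper bound in part~(2) there is nothing to prove, since $g_{\ell+1}(n)=o(n)$ implies $\RT(K_3,K_{2\ell+1},g_{\ell+1}(n))\le \RT(K_3,K_{2\ell+1},o(n))$. For the matching lower bound I would revisit Construction~\ref{const-oddeven} for $\cH(n,2\ell+1)$, planting inside each part $V_i$ a triangle-free graph $\Gamma_{|V_i|}$ with independence number $O(\sqrt{|V_i|\log|V_i|})$, as furnished by the classical construction certifying Kim's lower bound on $R(3,k)$. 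Since $(\log n)^{\ell/(\ell+1)}=o(\log n)$, comparing $\sqrt{n\log n}$ with $g_{\ell+1}(n)=n\cdot 2^{-\om(n)(\log n)^{\ell/(\ell+1)}}$ shows $\sqrt{n\log n}\le g_{\ell+1}(n)$ for every $\ell\ge 3$ provided $\om(n)\to\infty$ sufficiently slowly, so the resulting $G\in\cH(n,2\ell+1)$ has $\al(G)\le g_{\ell+1}(n)$ and the same asymptotic triangle count as in Theorem~\ref{thm-k3}.

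The $K_5$ case of~(1) is immediate from the weighted-cluster machinery of Section~\ref{sec-ks}: Lemma~\ref{lem-completefree2} forbids a weighted $K_5$ in the cluster graph $R$ of any $K_5$-free graph with $\al=o(n)$, and Lemma~\ref{lem-triangle} with $t=5$ yields $T(R)\le a_2 n^3=0$ (since $\binom{2}{3}=0$), whence $T(G)=o(n^3)$; as $g_3(n)=o(n)$, this gives the claim. For both the $K_6$ case of~(1) and part~(3), the lower bound is realized by the same $\cH(n,2\ell+1)$ construction (with $\ell=2$ in the $K_6$ case), which is $K_{2\ell+1}$-free and hence $K_{2\ell+2}$-free. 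It remains only to prove the upper bound in part~(3).

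Let $G$ be $K_{2\ell+2}$-free with $\al(G)\le g_{\ell+1}(n)$. Apply Theorem~\ref{thm-color} to produce the weighted cluster graph $R$; the plan is to show $R$ contains no weighted complete subgraph of size $2\ell+1$. Granting this, Lemma~\ref{lem-triangle} with $t=2\ell+1$ gives $T(R)\le a_\ell m^3$ (the odd-clique value), and a standard translation, as in the proof of Theorem~\ref{thm-k3}, gives $T(G)\le (a_\ell+o(1))n^3=(1+o(1))\RT(K_3,K_{2\ell+1},o(n))$. To rule out a weighted $K_{2\ell+1}$ in $R$ I would invoke the following BHS-style enhancement of Lemma~\ref{lem-completefree2}: for every $\ep>0$ and integer $r\ge 3$, if an $n$-vertex graph $G$ with $\al(G)\le g_r(n)$ has a weighted cluster graph containing a weighted complete subgraph of size $2r-1$, then $G$ contains $K_{2r}$. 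Applied with $r=\ell+1$, the assumed $K_{2\ell+2}$-freeness of $G$ gives the required contradiction.

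The main obstacle is the precise proof of this BHS-enhancement. A weighted $(X,Y)$-clique of size $2r-1$ in $R$ lifts to a dense blow-up in $G$ decomposing into an inner weight-$1$ clique $X$ (which by Lemma~\ref{lem-completefree2} already yields a $K_{|X|}$ in $G$) and an outer extension $Y\setminus X$ whose pairs with members of $X$ have weight at least $1/2$. If every such pair were weight~$1$, Lemma~\ref{lem-completefree2} would already produce a $K_{2r-1}$ of weight~$1$ and, combined with Ramsey-Tur\'an arguments inside a single class, one could bootstrap to $K_{2r}$; otherwise, some pair has weight exactly $1/2$, corresponding to a Bollob\'as-Erd\H os-like dense bipartite block in the blow-up. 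By the main theorem of~\cite{BHS}, any $K_{2r}$-free graph hosting such a BE-like block on $\Th(n)$ vertices has independence number at least $g_r(n)$, contradicting the hypothesis $\al(G)\le g_r(n)$. Making this rigorous essentially recapitulates the BHS argument and constitutes the technical heart of the jump phenomenon.
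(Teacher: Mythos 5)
Your part~(2) is fine and matches the paper. The serious problems are in parts~(1) and~(3). For the $K_5$ case you apply Lemma~\ref{lem-triangle} with $t=5$ to get $T(R)\le a_2m^3=0$ and hence $k_3(G)=o(n^3)$ for \emph{every} $K_5$-free $G$ with $\al(G)=o(n)$; this cannot be right, since it contradicts Theorem~\ref{thm-ks} with $s=3$, which gives $\RT(K_3,K_5,o(n))=\left(2^{-3}+o(1)\right)(n/3)^3=\Theta(n^3)$. Concretely, the weighted cluster graph of the three-partite Bollob\'as--Erd\H{o}s construction is essentially a complete $3$-partite weighted graph with all cross-weights $1/2$: it has no weighted clique of size $5$ (here $E_1=\emptyset$ forces $|X|\le 1$, and $|Y|\le 3$), yet $T=\Theta(m^3)$. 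So Lemma~\ref{lem-triangle} cannot be invoked at $t=5$ (in the paper it is only used for $t\ge 6$), and indeed your argument never uses the hypothesis $\al(G)\le g_3(n)$ beyond $o(n)$, whereas the jump genuinely requires it. The paper handles both cases of~(1) at once: $\RT(K_3,K_5,g_3(n))\le\RT(K_3,K_6,g_3(n))$, and Lemma~\ref{lem-BHS} with $q=3$ forces the \emph{unweighted} cluster graph (all pairs of density at least $10\ep_1$, including the half-weight ones) to be triangle-free, whence $k_3(G)=o(n^3)$. Relatedly, you give no upper-bound argument at all for the $K_6$ case of~(1); you only mention a lower-bound construction, which is not what the statement $\RT(K_3,K_6,g_3(n))=o(n^3)$ requires.

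For part~(3) your plan (rule out a weighted clique of size $2\ell+1$ in $R$, then apply Lemma~\ref{lem-triangle} with $t=2\ell+1$) would work, but it rests on the ``BHS-style enhancement'' that you leave unproven and support only by an appeal to ``the main theorem of~\cite{BHS}'', which is not a statement of that form. The missing ingredient is exactly Claim~6.1 of Balogh--Hu--Simonovits, quoted in the paper as Lemma~\ref{lem-BHS}: if $\al(G)\le g_q(n)$ and the cluster graph contains a $K_q$, then $K_{2q}\subseteq G$. Your enhancement is an immediate corollary of it: a weighted clique $(X,Y)$ of size $2r-1$ has $|Y|\ge r$, and $Y$ is a $K_r$ in the unweighted cluster graph, so $K_{2r}\subseteq G$. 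With Lemma~\ref{lem-BHS} in hand, the paper's route is also simpler than yours: for $K_{2\ell+2}$-free $G$ with $\al(G)\le g_{\ell+1}(n)$ the cluster graph is $K_{\ell+1}$-free, so by Erd\H{o}s's theorem it has at most $\binom{\ell}{3}(m/\ell)^3$ triangles, each contributing at most $(n/m)^3$ triangles of $G$; no weighted machinery is needed, and the same argument (with $\ell=2$) disposes of the $K_6$ case of~(1). In summary: part~(2) correct; part~(1) wrong for $K_5$ and missing for $K_6$; part~(3) structurally reasonable but its key lemma is unproved, although it is available as a citation.
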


We will need a lemma by Balogh-Hu-Simonovits (Claim 6.1 in~\cite{BHS}).

\begin{lemma}\label{lem-BHS}
	Let $G$ be an  $n$-vertex graph with $\al(G)=g_q(n)$, where $g_q(n)=n2^{-w(n)\log^{1-1/q}n}$ and $\omega(n)\rightarrow\infty$ arbitrary slowly. If there exists a $K_q$ in the cluster graph of $G$, then $K_{2q}\subseteq G$. 
\end{lemma}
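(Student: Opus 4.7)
The plan is to embed $K_{2q}$ by iteratively selecting, cluster by cluster of the purported $K_q$ in the cluster graph, an edge inside each cluster whose $2q$ endpoints remain pairwise adjacent across clusters. The cross-cluster edges come essentially for free from regularity (a counting-lemma style estimate), while the within-cluster edges come from the small independence number condition on $G$.

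Let $V_1,\ldots,V_q$ be the clusters forming the $K_q$ in the cluster graph, so each pair $(V_i,V_j)$ is $\ep$-regular of density at least $d\ge 10\ep$, and $|V_i|=\Theta(n)$. Fix $\ep$ small compared to $d^{2q}/q$. Inductively at step $i$, I maintain subsets $W_j\subseteq V_j$ for $j\ge i$ such that (a) every $x_k,y_k$ chosen so far is adjacent to all of $W_j$; (b) each pair $(W_j,W_\ell)$ stays $2\ep$-regular with density at least $d/2$ (which holds as long as $|W_j|\ge\ep|V_j|$, a standard consequence of the regularity definition); and (c) $|W_j|\ge (d^2/4)^{i-1}|V_j|$, which after $q$ steps is still $\Theta(n)$.

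At step $i$, I first restrict to $W^*\subseteq W_i$ consisting of vertices $v$ with $|N(v)\cap W_j|\ge(d/2-\ep)|W_j|$ for every $j>i$. Regularity yields $|W^*|\ge(1-q\ep)|W_i|$. A standard second-moment (counting-lemma) argument then shows that all but $O(q\ep)|W^*|^2$ pairs $\{x,y\}\subseteq W^*$ satisfy $|N(x)\cap N(y)\cap W_j|\ge (d^2/16)|W_j|$ for every $j>i$; call such pairs \emph{good}. Now the small independence number enters: since $\al(G[W^*])\le\al(G)\le g_q(n)$, applying Tur\'an's theorem to the complement of $G[W^*]$ yields $e(G[W^*])\ge\binom{|W^*|}{2}/g_q(n)$. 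Because $|W^*|=\Theta(n)$ while $g_q(n)=o(n)$, this edge count dwarfs the $O(q\ep)|W^*|^2$ bad pairs, so some good edge $\{x_i,y_i\}$ exists. Updating $W_j\leftarrow N(x_i)\cap N(y_i)\cap W_j$ shrinks the sets by at most a factor of $d^2/16$ and preserves the invariants (b)--(c). After $q$ iterations the $2q$ chosen vertices span $K_{2q}$ in $G$.

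The main technical obstacle is the quantitative bookkeeping across the $q$ rounds: verifying that the restricted subsets remain a positive fraction of $n$ so that the $2\ep$-regularity and the density lower bound $d/2$ are genuinely preserved, and that the comparison $e(G[W^*])\gg\ep|W^*|^2$ survives all $q$ iterations. The super-polynomial gap between $n$ and $g_q(n)=n\cdot 2^{-\omega(n)\log^{1-1/q}n}$ provides enormous slack against the merely constant-factor losses from each iteration, so once the bookkeeping is in place the crucial comparison reduces to $|W^*|/g_q(n)\gg\ep^{-1}$, which is immediate. The specific exponent $1-1/q$ in $g_q(n)$ is not needed for this bound to hold; it is the natural threshold up to which the Bollob\'as--Erd\H os type constructions persist, which is why BHS phrase the assumption with precisely that form.
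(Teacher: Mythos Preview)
The paper does not supply its own proof of this lemma; it is quoted as Claim~6.1 of Balogh--Hu--Simonovits~\cite{BHS}. So there is no in-paper argument to compare against, and I evaluate your attempt on its own.

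There is a genuine gap at the key inequality. You correctly obtain $e(G[W^*])\ge\binom{|W^*|}{2}/g_q(n)$ from Tur\'an, and the bound $O(q\ep)|W^*|^2$ on bad pairs is also correct. But the claim that the former ``dwarfs'' the latter is false: cancelling $|W^*|^2$ from both sides, the comparison you actually need is
\[
\frac{1}{2g_q(n)}\ \gg\ q\ep,\qquad\text{equivalently}\qquad g_q(n)\ \ll\ \frac{1}{2q\ep},
\]
a \emph{fixed constant} depending only on $\ep$ and $q$. Your stated reduction to ``$|W^*|/g_q(n)\gg\ep^{-1}$'' drops one factor of $|W^*|$ that should have cancelled. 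Since $\omega(n)$ may tend to infinity arbitrarily slowly (e.g.\ $\omega(n)=\log\log\log n$), one has $\omega(n)\log^{1-1/q}n=o(\log n)$ and hence $g_q(n)=n^{1-o(1)}\to\infty$; the needed inequality then fails for all large $n$, and it is entirely possible that every edge of $G[W^*]$ lies among the bad pairs.

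Consequently your closing remark that the exponent $1-1/q$ in $g_q(n)$ ``is not needed'' is exactly backwards. Any argument that ignores this exponent and uses only $g_q(n)=o(n)$ can at best handle $\alpha(G)\le c(\ep,d,q)$, a constant independent of $n$; it cannot reach $\alpha(G)\le g_q(n)$. The Balogh--Hu--Simonovits proof exploits the specific shape of $g_q$ in an essential way through an iteration calibrated to the parameter $q$, which is precisely why the threshold function carries the exponent $1-1/q$.
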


\begin{proof}[Proof of Theorem~\ref{thm-jump}]
	For~(\ref{itm-jump56}), note that $\RT(K_3,K_5,g_3(n))\le\RT(K_3,K_6,g_3(n))$. Therefore, we only need to prove $\RT(K_3,K_6,g_3(n))=o(n^3)$. By Lemma~\ref{lem-BHS}, if $G$ is an $n$-vertex $K_6$-free graph with $\al(G)\le g_3(n)$ then the cluster graph of $G$ is $K_3$-free, which means that $k_3(G)=o(n^3)$.
	
	For~(\ref{itm-jumpodd}), note that $\RT(K_3,K_{2\ell+1},g_{\ell+1}(n))\le \RT(K_3,K_{2\ell+1}, o(n))$, hence, by Theorem~\ref{thm-k3}, it is sufficient to prove $\RT(K_3,K_{2\ell+1},g_{\ell+1}(n))\ge(1+o(1)){\ell\choose 3}\left(\frac{n}{\ell}\right)^3$. Construction~\ref{const-oddeven} shows that this inequality holds.
	
	For~(\ref{itm-jumpeven}), note that $\RT(K_3,K_{2\ell+1},g_{\ell+1}(n))\le \RT(K_3,K_{2\ell+2}, g_{\ell+1})$. Hence, using~(\ref{itm-jumpodd}), we only need to show that \mbox{$ \RT(K_3,K_{2\ell+2}, g_{\ell+1}(n))\le \RT(K_3,K_{2\ell+1},o(n))$}. By Lemma~\ref{lem-BHS},  if $G$ is an $n$-vertex $K_{2\ell+2}$-free graph with $\al(G)\le g_{\ell+1}(n)$ then the cluster graph of $G$ is $K_{\ell+1}$-free. Then, by the result of Erd\Ho s~\cite{Erd}, among all $K_{\ell+1}$-free graphs the $\ell$-partite Tur\'an graph has the maximum number of triangles. Hence, we have
	\begin{eqnarray*}
		\RT(K_3,K_{2\ell+2},g_{\ell+1}(n))\le \left(1+o(1)\right){\ell\choose 3}\left(\frac{n}{\ell}\right)^3=RT(K_3,K_{2\ell+1}, o(n)),
	\end{eqnarray*}
	where the last equality is by Theorem~\ref{thm-k3}.
\end{proof}


\section{Acknowledgement}
The authors would like to thank the anonymous referees for their careful reading and helpful comments. 

\end{document}